\documentclass{amsart}
\usepackage{amsmath,amssymb,mathrsfs,hyperref,color}
\usepackage{subfigure}
\usepackage{float} 
\usepackage{times}
\usepackage{tikz}
\usetikzlibrary{intersections}
\usepackage{paralist}

\makeatletter
\def\setliststart#1{\setcounter{\@listctr}{#1}%
  \addtocounter{\@listctr}{-1}}
\makeatother

\makeatletter
\@addtoreset{figure}{section}
\makeatother

\setcounter{tocdepth}{4}
\setcounter{secnumdepth}{4}

\usepackage{calc}
 \newtheorem{The}{Theorem}[section]
 \newtheorem{Cor}[The]{Corollary}
 \newtheorem{Lem}[The]{Lemma}
 \newtheorem{Pro}[The]{Proposition}
 \theoremstyle{definition}
 
 \theoremstyle{remark}
 \newtheorem{Rem}[The]{Remark}
 
 \numberwithin{equation}{section}

\newcommand{\R}{\mathbb{R}}

\title[On the vanishing contact structure for viscosity solutions]{On the vanishing contact structure for viscosity solutions of contact type Hamilton-Jacobi equations I:\\ Cauchy problem}
\author{Kai Zhao \and Wei Cheng}
\address{Department of Mathematics, Nanjing University, Nanjing 210093, China}
\email{15251879427@163.com}
\address{Department of Mathematics, Nanjing University, Nanjing 210093, China}
\email{chengwei@nju.edu.cn}
\date{\today}
\subjclass[2010]{35F21, 49L25, 37J50}
\keywords{Hamilton-Jacobi equation, viscosity solution, contact structure.}
\begin{document}
\maketitle

\begin{abstract}
	We study the representation formulae for the fundamental solutions and viscosity solutions of the Hamilton-Jacobi equations of contact type. We also obtain a vanishing contact structure result for relevant Cauchy problems which can be regarded as an extension to the vanishing discount problem. 
\end{abstract}

\section{Introduction}

In the previous work (\cite{Wang-Wang-Yan1} and \cite{Wang-Wang-Yan2}), the authors developed an analogy to weak KAM theory for contact systems on compact manifolds. This leads to a representation formula of the viscosity solutions of the Hamilton-Jacobi equation
\begin{equation}\label{eq:intro_HJe}\tag{HJ$_e$}
\begin{cases}
	D_tu(t,x)+H(x,u(t,x),D_xu(t,x))=0,& (t,x)\in(0,+\infty)\times M\\
	u(0,x)=\phi(x),& x\in M,
\end{cases}
\end{equation}
and
\begin{equation}\label{eq:intro_HJs}\tag{HJ$_s$}
H(x,u(x),Du(x))=c,\quad x\in M,
\end{equation}
using \textit{implicit variational principle}, where $M$ is a $C^2$ connected closed manifold and $c$ is contained in the set of critical values. The power of this celebrating work has been shown to understand certain systems in a much wider and deeper viewpoint. The main purpose of this paper is to understand the limit of the viscosity solutions of \eqref{eq:intro_HJe} in the case $M=\R^n$ when $H_u$ is uniformly bounded and tends to $0$.

For the special case when $H$ having the form of a s Hamiltonian $H_0$ with a discount factor $\lambda>0$, i.e., $H=\lambda u+H_0(x,p)$, this problem has been widely studies. From calculus of variations and optimal controls point of view, the associated Lagrangian $L=-\lambda u+L_0(x,v)$, where $\lambda>0$ and $L_0$ is a Tonelli Lagrangian. A classical problem in ergodic control consists of studying the limit behavior of the optimal value $u_{\lambda}$ of a discounted cost functional with infinite horizon as the discount factor $\lambda$ tends to zero. In the literature, this problem has been addressed under various conditions ensuring that the rescaled value function $\lambda u_{\lambda}$ converges uniformly to a constant limit. 

In recent works, for instance, \cite{DFIZ}, \cite{IMT1} and \cite{IMT2}, the behavior of the vanishing discount limit has been widely studied in the compact manifold case, especially applying Aubry-Mather theory and weak KAM theory. Sufficient evidences show that the method we use in this paper is also a way to understand the vanishing contact structure limit by developing the Aubry-Mather theory for contact type systems \eqref{eq:intro_HJs} under suitable conditions.

We will use a Langrangian approach of the solutions of \eqref{eq:intro_HJe} in the viscosity sense developed in \cite{CCY} using the {\em generalized variational principle} proposed by Gustav Herglotz in 1930 (see \cite{CCY} and the references therein). 

Let $H:\R^n\times\R\times\R^n\to\R$ be a function of class $C^2$ satisfying the following conditions:
\begin{enumerate}[(H1)]
  \item $H_{pp}(x,r,p)>0$ for all $(x,r,p)\in \R^n\times\R\times\R^n$;
  \item For each $r\in\R$, there exist two superlinear and nondecreasing function $\overline{\Theta}_r,\Theta_r:[0,+\infty)\to[0,+\infty)$ and $C_r>0$, such that
  $$
  \overline{\Theta}_r(|p|)\geqslant H(x,r,p)\geqslant\Theta_r(|p|)-C_r,\quad (x,p)\in\R^n\times\R^n.
  $$
  \item There exists $K>0$ such that
  $$
  |H_r(x,r,p)|\leqslant K,\quad (x,r,p)\in \R^n\times\R\times\R^n.
  $$
\end{enumerate}
It is natural to introduce the associated Lagrangian
\begin{equation}\label{eq:intro_L}
	L(x,r,v)=\sup_{p\in\R^n}\{\langle p,v\rangle-H(x,r,p)\},\quad (x,r,v)\in \R^n\times\R\times\R^n.
\end{equation}
Set $L_0(x,v)=L(x,0,v)$. 

Fix $x,y\in\R^n$, $u\in\R$ and $t>0$. Let $\xi\in\Gamma^t_{x,y}$, we consider the Carath\'eodory equation 
\begin{equation}\label{eq:caratheodory_L}
	\dot{u}_{\xi}(s)=L(\xi(s),u_{\xi}(s),\dot{\xi}(s)),\quad a.e.\ s\in[0,t]
\end{equation}
with initial conditions $u_{\xi}(0)=u$. We define
\begin{equation}\label{eq:fundamental_solution}
	h(t,x,y,u):=\inf_{\xi}\int^t_0L(\xi(s).u_{\xi}(s),\dot{\xi}(s))\ ds,\quad A(t,x,y,u):=h(t,x,y,u)+u,
\end{equation}
where the infimum is taken over $\xi\in\Gamma^t_{x,y}=\{\xi:W^{1,1}([0,t],\R^n): \xi(0)=x, \xi(t)=y\}$ and $u_{\xi}:[0,t]\to\R^n$ is a absolutely continuous curve determined by \eqref{eq:caratheodory_L}. Problem \eqref{eq:fundamental_solution} under the constraint \eqref{eq:caratheodory_L} is called \textit{generalized variational principle} of Herglotz (\cite{GGG}, \cite{GG1} and \cite{GG2}). We call $h(t,x,y,u)$ defined in \eqref{eq:fundamental_solution} the {\em fundamental solution}. It is clear that \eqref{eq:caratheodory_L} admits a unique absolutely continuous solution $u_{\xi}(s)=u_{\xi}(s;u)$ on $[0,t]$, by Proposition \ref{caratheodory}, if there exists $f\in L^1([0,t],\R)$ such that $|L(\xi(s),u,\dot{\xi}(s))|\leqslant f(s)$, a.e., $s\in[0,t]$ . In \cite{CCY}, the authors showed that 

\begin{Pro}\label{Herglotz_Lie}
Let $L$ satisfy conditions \mbox{\rm (L1)-(L3)}, then for fixed $x,y\in\R$, $t>0$ and $u\in\R$, the problem \eqref{eq:fundamental_solution} admits a minimizer. Moreover,
\begin{enumerate}[\rm (a)]
  \item Both $\xi$ and $u_{\xi}$ are of class $C^2$ and $\xi$ satisfies Herglotz equation
  \begin{equation}\label{eq:Herglotz}
	\begin{split}
		&\,\frac d{ds}L_v(\xi(s),u_{\xi}(s),\dot{\xi}(s))\\
	=&\,L_x(\xi(s),u_{\xi}(s),\dot{\xi}(s))+L_u(\xi(s),u_{\xi}(s),\dot{\xi}(s))L_v(\xi(s),u_{\xi}(s),\dot{\xi}(s)).
	\end{split}
  \end{equation}
       for all $s\in[0,t]$ where $u_{\xi}$ is the unique solution of \eqref{eq:caratheodory_L};
  \item Let $p(s)=L_v(\xi(s),u_{\xi}(s),\dot{\xi}(s))$ be the dual arc, then $p$ is also of class $C^2$ and we conclude that $(\xi,p,u_{\xi})$ satisfies Lie equation
  \begin{equation}
  	\begin{cases}
  		\dot{\xi}(s)=H_p(\xi(s),u_{\xi}(s),p(s));\\
  		\dot{p}(s)=-H_x(\xi(s),u_{\xi}(s),p(s))-H_u(\xi(s),u_{\xi}(s),p(s))p(s);\\
  		\dot{u}_{\xi}(s)=p(s)\cdot\dot{\xi}(s)-H(\xi(s),u_{\xi}(s),p(s)).
  	\end{cases}
  \end{equation}
\end{enumerate}
\end{Pro}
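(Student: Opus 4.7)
The plan is to prove existence by the direct method of the calculus of variations, then to derive Herglotz's equation from a first variation that tracks the nonstandard $u_\xi$-dependence, and finally to pass to the contact Hamiltonian form by Legendre duality. First I would take a minimizing sequence $\xi_n\in\Gamma^t_{x,y}$ for the functional in \eqref{eq:fundamental_solution}. The uniform Lipschitz bound $|L_u|\leqslant K$ from (L3), together with superlinearity of $L$ in $v$ from (L2), applied through Gronwall to the Carath\'eodory equation \eqref{eq:caratheodory_L}, controls $\sup_{s\in[0,t]}|u_{\xi_n}(s)|$ in terms of $\int_0^t L_0(\xi_n,\dot\xi_n)\,ds$. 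Superlinearity then yields equi-absolute continuity of $\{\xi_n\}$ and Dunford--Pettis compactness of $\{\dot\xi_n\}$ in $L^1$, so up to a subsequence $\xi_n\to\xi$ uniformly on $[0,t]$ and $\dot\xi_n\rightharpoonup\dot\xi$ weakly in $L^1([0,t],\R^n)$.

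Next I would verify $u_{\xi_n}\to u_\xi$ uniformly. Subtracting the Carath\'eodory equations for $u_{\xi_n}$ and $u_\xi$, splitting $L(\xi_n,u_{\xi_n},\dot\xi_n)-L(\xi,u_\xi,\dot\xi)$ by adding and subtracting $L(\xi_n,u_\xi,\dot\xi_n)$, using $|L_u|\leqslant K$ on the first piece, and applying Gronwall, one obtains
\begin{equation*}
\sup_{s\in[0,t]}|u_{\xi_n}(s)-u_\xi(s)|\leqslant e^{Kt}\int_0^t\bigl|L(\xi_n(s),u_\xi(s),\dot\xi_n(s))-L(\xi(s),u_\xi(s),\dot\xi(s))\bigr|\,ds,
\end{equation*}
whose right-hand side tends to $0$ by the uniform convergence $\xi_n\to\xi$, the weak $L^1$-convergence of $\dot\xi_n$, and an Ioffe-type argument that leverages $v$-convexity of $L$ from (L1). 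The same Tonelli--Cesari argument applied to the $v$-convex integrand $L(\cdot,u_\xi,\cdot)$ delivers lower semicontinuity of the Herglotz cost along $\xi_n$, so the infimum is attained by a minimizer $\xi$.

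For part (a), I would derive Herglotz's equation by a first variation. Given $\eta\in C^\infty_c((0,t),\R^n)$ and $\xi_\epsilon=\xi+\epsilon\eta$, smooth parameter dependence of the Carath\'eodory ODE under (L1)--(L3) gives that $w:=\partial_\epsilon u_{\xi_\epsilon}\big|_{\epsilon=0}$ exists and satisfies the linear ODE
\begin{equation*}
\dot w(s)=L_x\cdot\eta(s)+L_u\,w(s)+L_v\cdot\dot\eta(s),\qquad w(0)=0,
\end{equation*}
where the partial derivatives are evaluated at $(\xi(s),u_\xi(s),\dot\xi(s))$. Minimality forces $w(t)=0$ for every admissible $\eta$; solving the linear equation by the integrating factor $\exp(-\int_0^s L_u\,d\tau)$, integrating by parts in the $L_v\cdot\dot\eta$ term, and invoking the fundamental lemma of the calculus of variations yields precisely \eqref{eq:Herglotz}. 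The $C^2$-regularity then follows from (L1): strict convexity of $L$ in $v$ makes $v\mapsto L_v(x,r,v)$ a $C^1$ diffeomorphism, allowing one to rewrite Herglotz as a first-order $C^1$ system in $(\xi,L_v(\xi,u_\xi,\dot\xi),u_\xi)$, after which the classical bootstrap gives $C^2$.

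For part (b), once $\xi$ is $C^2$ so is $p=L_v(\xi,u_\xi,\dot\xi)$, and the standard Legendre duality identities $H_x=-L_x$ and $H_u=-L_u$ evaluated at $v=H_p(x,r,p)$ convert \eqref{eq:Herglotz} into the first two Lie equations (the contact damping $-H_u\,p$ arising from the $L_u L_v$ term), while the third Lie equation is just \eqref{eq:caratheodory_L} rewritten through $L(x,r,v)=p\cdot v-H(x,r,p)$. The main obstacle is the existence and lower-semicontinuity step, since the cost $\int_0^t L(\xi_n,u_{\xi_n},\dot\xi_n)\,ds$ is \emph{not} a classical Tonelli integral in $\xi_n$ alone: $u_{\xi_n}$ is determined by $\xi_n$ through the nonlinear ODE \eqref{eq:caratheodory_L}, and taming this coupling is exactly where the uniform Lipschitz hypothesis (L3) becomes indispensable.
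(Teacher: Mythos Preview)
First, note that the paper does not prove this proposition: it is quoted from \cite{CCY}. Your derivation of the Herglotz equation in (a) via the linearised Carath\'eodory ODE for $w=\partial_\epsilon u_{\xi_\epsilon}|_{\epsilon=0}$, the integrating factor $e^{-\int_0^sL_u}$, and du Bois-Reymond is correct, as is the $C^2$ bootstrap and the passage to the Lie system in (b) by Legendre duality.

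The existence step, however, has a genuine gap. Your Gronwall bound on $|u_{\xi_n}-u_\xi|$ is fine, but the right-hand side $\int_0^t|L(\xi_n,u_\xi,\dot\xi_n)-L(\xi,u_\xi,\dot\xi)|\,ds$ does \emph{not} tend to zero under mere weak $L^1$-convergence of $\dot\xi_n$: take $L=\tfrac12|v|^2$ and $\dot\xi_n(s)=\cos(ns)\rightharpoonup 0$, for which the integral equals $t/4$ for every $n$. Ioffe-type theorems deliver only $\liminf\int L(\xi_n,u_\xi,\dot\xi_n)\geqslant\int L(\xi,u_\xi,\dot\xi)$, never convergence of the absolute difference. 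Hence $u_{\xi_n}\to u_\xi$ is in general false for weakly convergent $\dot\xi_n$, and your subsequent lower-semicontinuity step---which freezes the $u$-slot at $u_\xi$ precisely because you have already assumed $u_{\xi_n}\to u_\xi$---is circular. A repair that stays within your framework: from the uniform bound on $u_{\xi_n}$ and $\dot u_{\xi_n}=L(\xi_n,u_{\xi_n},\dot\xi_n)\geqslant -c$ (via (L2) on the bounded $u$-range), each $s\mapsto u_{\xi_n}(s)+cs$ is nondecreasing, so Helly's selection theorem gives a subsequence with $u_{\xi_n}\to w$ pointwise on $[0,t]$. Now apply Ioffe on every subinterval $[a,b]$ with the \emph{actual} pair $(\xi_n,u_{\xi_n})\to(\xi,w)$ (which converges in measure) to obtain $w(b)-w(a)\geqslant\int_a^bL(\xi,w,\dot\xi)\,ds$; thus $w$ is a supersolution of \eqref{eq:caratheodory_L} along $\xi$, and a one-sided Gronwall comparison using $|L_u|\leqslant K$ gives $w\geqslant u_\xi$ on $[0,t]$. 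In particular $\liminf u_{\xi_n}(t)=w(t)\geqslant u_\xi(t)$, which is the lower semicontinuity you actually need.
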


Such a Lagrangian approach also leads to a very clear explanation of the representation formulae of the value function of the associated problem from calculus of variations. That is, due to Proposition \ref{Herglotz_Lie}, we conclude that the relevant fundamental solution 
\begin{align*}
	h(t,x,y,u)=\inf_{\xi\in\Gamma^t_{x,y}\cap C^{2}([0,t])}u_{\xi}(t)-u,\quad t>0,\ x,y\in\R^n, u\in\R,
\end{align*}
and $u_\xi$ is uniquely determined by \eqref{eq:caratheodory_L} in classical sense. By solving the ordinary differential equation \eqref{eq:caratheodory_L}, we can have some new representation formulae of the viscosity solution $u(t,x)$ of \eqref{eq:intro_HJe}. \begin{equation}\label{eq:fundamental_solution2}
	u(t,x)=\inf_{\xi\in\Gamma^t_{y,x}}\left\{e^{\int^t_0L_u d\tau}\phi(\xi(0))+\int^t_0e^{\int^t_sL_u d\tau}(L-u_{\xi}\cdot L_u)\ ds\right\},
	\end{equation}
	where $u_{\xi}$ is uniquely determined by \eqref{eq:caratheodory_L} with $u_{\xi}(t)=u$.

This approach also leads to a result on the vanishing contact structure limit problem. This can be regarded as a generalization of the vanishing discount problem in PDE and control theory.

\medskip

\noindent{\bf Main Result I:} Suppose that $\{L^{\lambda}\}_{\lambda>0}$ is a family of Tonelli Lagrangians satisfying conditions \mbox{\rm (L1)}, \mbox{\rm (L2)} and \mbox{\rm (L3')} at the beginning of section 2, with $\{H^{\lambda}\}$ the family of associated Tonelli Hamiltonians. Let each $u^{\lambda}$ be the unique viscosity solution of \eqref{eq:intro_HJe} with respect to $H^{\lambda}$ and $u$ defined by \eqref{eq:value_function_no_u} be the unique viscosity solution of \eqref{eq:HJe1}. If $\phi$ is Lipschitz and bounded, then
\begin{align*}
	\lim_{\lambda\to0^+}u^{\lambda}(t,x)=u(t,x),\quad (t,x)\in(0,+\infty)\times\R^n.
\end{align*}

\noindent{\bf Main Result II:}  Under the same assumptions as above and replacing \mbox{\rm (L3')} by \mbox{\rm (L3'')} (at the beginning of section 2), then $u^{\lambda}$ tends to $u$ uniformly as $\lambda\to0^+$ on any compact subset of $(0,+\infty)\times\R^n$.	

\medskip

This paper is organized as follows. In Section 2.1, we give a representation formula for the equation \eqref{eq:intro_HJe}. In section 2.2, we discuss our vanishing contact structure results for \eqref{eq:intro_HJe}.

\medskip

\noindent{\bf Acknowledgments} This work is partly supported by Natural Scientific Foundation of China (Grant No. 11631006, No. 11790272 and No.11471238). The authors thank Qinbo Chen and Hitoshi Ishii for for helpful discussion.

\section{Representation formula and vanishing contact structure}

We will study \eqref{eq:intro_HJe} when $M=\R^n$. It is not difficult to see that the associated Lagrangian $L$ defined in \eqref{eq:intro_L} is a function of $C^2$ class and it satisfies the following conditions:
\begin{enumerate}[(L1)]
  \item $L_{vv}(x,r,v)>0$ for all $(x,r,v)\in \R^n\times\R\times\R^n$.
  \item For each $r\in\R$, there exist two superlinear and nondecreasing function $\overline{\theta}_r,\theta_r:[0,+\infty)\to[0,+\infty)$, $\theta_r(0)=0$ and $c_r>0$, such that 
  $$
  \overline{\theta}_r(|v|)\geqslant L(x,r,v)\geqslant\theta_r(|v|)-c_r,\quad (x,v)\in\R^n\times\R^n.
  $$
  \item There exists $K>0$ such that
  $$
  |L_r(x,r,v)|\leqslant K,\quad (x,r,v)\in \R^n\times\R\times\R^n.
  $$
\end{enumerate}
Let $\{L^{\lambda}\}_{\lambda>0}$ be a family of Tonelli Lagrangians satisfying conditions (L1)-(L3). We denote by $H^{\lambda}$ the associated Hamiltonians. Set $L_0(x,v):=L(x,0,v)$ and $L_{\lambda}(x,v):=L^{\lambda}(x,0,v)$. For the family $\{L^{\lambda}\}_{\lambda>0}$ we also need the following conditions:
\begin{enumerate}[(L3')]
	\item $|L^{\lambda}_u|\leqslant K_{\lambda}$ with $\lim_{\lambda\to0^+}K_{\lambda}=0$ and $\lim_{\lambda\to0^+}L_{\lambda}(x,v)=L_0(x,v)$ for all $(x,v)$. Moreover, for any compact subset $S\subset\R^n\times\R^n$, there exists a constant $C_S$ such that $\sup_{(x,v)\in S}|L_{\lambda}(x,v)|\leqslant C_S$.
\end{enumerate}
\begin{enumerate}[(L3'')]
	\item $|L^{\lambda}_u|\leqslant K_{\lambda}$ with $\lim_{\lambda\to0^+}K_{\lambda}=0$ and $L_{\lambda}$ tends to $L_0$ uniformly as $\lambda\to0^+$ on any compact subset of $\R^n\times\R^n$.
\end{enumerate}

\subsection{Representation formulae for fundamental solutions and viscosity solutions} 

In this section, we want to give a new representation formula for the viscosity of the Hamilton-Jacobi equation \eqref{eq:intro_HJe} with $H$ satisfying condition (H1)-(H3). Such a representation formula for Tonelli systems appeared first in \cite{Wang-Wang-Yan1} and \cite{Wang-Wang-Yan2} using an implicit variational principle and a fixed point method. In \cite{CCY}, the authors give an alternative approach based on Hergoltz' variational principle. Our following new representation formula for the fundamental solutions is motivated by the multiplier rule (see \cite{Clarkebook3}). 

\begin{The}
If $L$ satisfies conditions \mbox{\rm (L1)-(L3)}, then we have that
\begin{equation}\label{eq:fundamental_solution2}
	A(t,x,y,u)=\inf_{\xi\in\Gamma^t_{x,y}}\left\{e^{\int^t_0L_u d\tau}u+\int^t_0e^{\int^t_sL_u d\tau}(L-u_{\xi}\cdot L_u)\ ds\right\},
\end{equation}
where $u_{\xi}$ is uniquely determined by \eqref{eq:caratheodory_L}.
\end{The}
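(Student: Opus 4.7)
The plan is to reduce the claimed identity to a variation-of-constants statement that holds pointwise in $\xi$, and then pass to the infimum.

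First I would use the Carath\'eodory equation itself to unpack the left-hand side. By integrating \eqref{eq:caratheodory_L} one has $u_{\xi}(t)-u=\int_0^t L(\xi(s),u_{\xi}(s),\dot{\xi}(s))\,ds$ for every admissible $\xi$, so by the definition \eqref{eq:fundamental_solution}
\begin{equation*}
A(t,x,y,u)=h(t,x,y,u)+u=\inf_{\xi\in\Gamma^t_{x,y}}u_{\xi}(t).
\end{equation*}
Thus it suffices to prove, for an arbitrary fixed $\xi\in\Gamma^t_{x,y}$, the identity
\begin{equation*}
u_{\xi}(t)=e^{\int_0^t L_u\,d\tau}\,u+\int_0^t e^{\int_s^t L_u\,d\tau}\bigl(L-u_{\xi}\cdot L_u\bigr)\,ds,
\end{equation*}
where $L$ and $L_u$ are evaluated along $(\xi(\cdot),u_{\xi}(\cdot),\dot{\xi}(\cdot))$. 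Taking the infimum over $\xi$ then yields the theorem.

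Next I would verify this pointwise identity by treating $L_u(s):=L_u(\xi(s),u_{\xi}(s),\dot{\xi}(s))$ and $b(s):=L(s)-u_{\xi}(s)L_u(s)$ as \emph{known} measurable functions of $s$ once $u_{\xi}$ has been produced by Proposition \ref{caratheodory}. The Carath\'eodory ODE can then be rewritten tautologically as the linear inhomogeneous equation
\begin{equation*}
\dot{u}_{\xi}(s)=L_u(s)\,u_{\xi}(s)+b(s),\qquad u_{\xi}(0)=u.
\end{equation*}
Hypothesis (L3) gives $|L_u|\leqslant K$, so $\int_0^t L_u\,d\tau$ is finite and Duhamel's formula applies. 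Writing $v(t)$ for the right-hand side of the desired identity, a direct differentiation (legitimate since the integrand is measurable and bounded, and the upper and inner limits are both $t$) shows $\dot v=L_u\,v+b$ with $v(0)=u$. Setting $w:=v-u_{\xi}$ one obtains $\dot w=L_u\,w$ with $w(0)=0$, so Gr\"onwall (or the uniqueness clause of Proposition \ref{caratheodory}) forces $w\equiv 0$.

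There is essentially no hard step: the identity is a disguised variation-of-constants formula, and the proof amounts to recognizing $L=L_u\cdot u_{\xi}+(L-u_{\xi}L_u)$ and applying Duhamel. The only mild subtlety is the regularity issue, since for a generic $\xi\in W^{1,1}$ the function $u_{\xi}$ is only absolutely continuous and $L_u$ along the trajectory is merely measurable; this is the place I would be careful, invoking (L3) to guarantee integrability of the exponents and justifying differentiation under the integral via the Lebesgue differentiation theorem applied to the (locally $L^1$) integrand.
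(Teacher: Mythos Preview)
Your proposal is correct and follows essentially the same idea as the paper: both arguments reduce to the variation-of-constants (integrating factor) identity for the scalar ODE $\dot u_{\xi}=L_u\,u_{\xi}+(L-u_{\xi}L_u)$, then take the infimum over $\xi$. The paper carries this out by a direct algebraic manipulation---inserting the factor $e^{-\int_0^s L_u\,d\tau}$ into $\int_0^t L\,ds$ and integrating by parts to isolate $u_{\xi}(t)$---while you invoke Duhamel's formula and verify it via uniqueness (Gr\"onwall); the substance is the same.
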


\begin{Rem}
If $L(x,u,v)=-\lambda u+L_0(x,v)$ for $\lambda>0$, then  $L_u=-\lambda$ and $L-u\cdot L_u\equiv L_0$. Therefore, the curve $u_{\xi}$ given by \eqref{eq:caratheodory_L} does not appear in the representation formula above except for the initial point $u_{\xi}(0)$.
\end{Rem}

\begin{proof}
	Fix $x,y\in\R^n$, $t>0$ and $u\in\R$. For any $\xi\in\Gamma^t_{x,y}$ with $u_{\xi}$ determined by \eqref{eq:caratheodory_L}, $u_{\xi}(0)=u$, we have that
	\begin{align*}
		u_{\xi}(t)=&\,u+\int^t_0L(\xi,u_{\xi},\dot{\xi})\ ds\\
		=&\,u+\int_0^t e^{-\int_0^s L_u(\xi,u_{\xi},\dot{\xi})d\tau} L(\xi,u_{\xi},\dot{\xi})+\left(1-e^{-\int_0^s L_u(\xi,u_{\xi},\dot{\xi})d\tau}\right)\dot{u}_{\xi} ds\\
		=&\,u+\int^t_0e^{-\int^s_0 L_u\ d\tau}(L-u_{\xi}\cdot L_u)\ ds+(1-e^{-\int^t_0L_u d\tau})u_{\xi}(t).
	\end{align*}
	Therefore
	\begin{align*}
		e^{-\int^t_0L_u d\tau}u_{\xi}(t)=u+\int^t_0e^{-\int^s_0 L_u\ d\tau}(L-u_{\xi}\cdot L_u)\ ds,
	\end{align*}
	or, equivalently,
	\begin{align*}
		u_{\xi}(t)=e^{\int^t_0L_u d\tau}u+\int^t_0e^{\int^t_sL_u d\tau}(L-u_{\xi}\cdot L_u)\ ds.
	\end{align*}
	Since $A(t,x,y,u)=\inf_{\xi\in\Gamma^t_{x,y}}u_{\xi}(t)$, then we obtain \eqref{eq:fundamental_solution2}.
\end{proof}


\begin{The}\label{representation_formula_2}
If $L$ satisfies conditions \mbox{\rm (L1)-(L3)}, or equivalently, $H$ satisfies conditions \mbox{\rm (H1)-(H3)}, and $\phi$ is bounded and Lipschitz real-valued function on $\R^n$ with a Lipschitz constant $\mbox{\rm Lip}\,(\phi)$, then 
\begin{equation}\label{eq:rep2}
	u(t,x)=\inf_{y\in\R^n}A(t,y,x,\phi(y)),\quad t>0, x\in\R^n,
\end{equation}
is solution of \eqref{eq:intro_HJe} in the viscosity sense where $A(t,x,y,u)$ is given by \eqref{eq:fundamental_solution2}.
\end{The}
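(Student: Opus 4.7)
The plan is to follow the standard dynamic programming approach with three ingredients: well-posedness and continuity of the infimum in \eqref{eq:rep2}, a dynamic programming principle (DPP), and the Herglotz/Lie characterization of minimizers supplied by Proposition~\ref{Herglotz_Lie}.

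First I would verify that $u(t,x)$ is well-defined and locally Lipschitz on $(0,\infty)\times\R^n$. Since $\phi$ is bounded and Lipschitz, the bound $|L_u|\leqslant K$ from (L3) combined with the superlinear lower bound (L2) shows that for each $(t,x)$ the infimum in \eqref{eq:rep2} is effectively taken over $y$ in a compact set whose size depends only on $t$, $\|\phi\|_\infty$ and $\mathrm{Lip}(\phi)$. Applying the same a priori estimates to straight-line comparison curves then yields local Lipschitz continuity of $u$ on strips $[\delta,T]\times\R^n$.

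Next I would establish the DPP: for $0<s<t$ and $x\in\R^n$,
\[
u(t,x)=\inf_{y\in\R^n} A(t-s,y,x,u(s,y)).
\]
The $\leqslant$ direction follows by concatenating an almost-minimizer from some point $z$ to $y$ on $[0,s]$ with one from $y$ to $x$ on $[s,t]$; uniqueness and the semigroup property of the Carath\'eodory equation \eqref{eq:caratheodory_L} glue the auxiliary arcs $u_{\xi}$ correctly, so that the concatenated cost equals the right-hand side. The $\geqslant$ direction is obtained by splitting an almost-minimizing curve for $u(t,x)$ at the intermediate time $s$.

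From this DPP I would derive the viscosity inequalities. For the subsolution property at a contact point $(t_0,x_0)$ where $u-\varphi$ attains a local maximum, I fix $v\in\R^n$, set $y=x_0-hv$, and use the straight line $\xi(\sigma)=x_0+(\sigma-t_0)v$ on $[t_0-h,t_0]$ in the DPP; expanding the representation formula of the preceding theorem and dividing by $h$ before letting $h\to 0^+$ gives
\[
\varphi_t(t_0,x_0)+\langle D\varphi(t_0,x_0),v\rangle-L(x_0,u(t_0,x_0),v)\leqslant 0,
\]
so taking $\sup_v$ yields $\varphi_t+H(x_0,u,D\varphi)\leqslant 0$. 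For the supersolution property at a local minimum I take a genuine minimizer $\xi^{*}$ (which is $C^2$ by Proposition~\ref{Herglotz_Lie}), apply the DPP on the short piece $[t_0-h,t_0]$ of $\xi^{*}$, and let $h\to 0^+$, using the Herglotz equation at the endpoint to recover the reverse inequality. The initial condition $u(t,x)\to\phi(x)$ as $t\to 0^+$ is checked by an upper bound (test with $y=x$ and the constant curve) and a matching lower bound obtained by using (L2) to force almost-optimal $y$ toward $x$ and then invoking $\mathrm{Lip}(\phi)$. The principal obstacle I expect is the non-compactness of $\R^n$: both the minimization in $y$ and the extraction of limit minimizing curves require quantitative localization based on the superlinear growth in (L2), while the bound (L3) keeps the exponential factor $e^{\int_0^t L_u\,d\tau}$ uniformly under control so that these localization arguments propagate cleanly through the representation formula.
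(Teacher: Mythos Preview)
Your proposal is correct and follows essentially the same route as the paper: both hinge on (i) a compactness argument showing the infimum in \eqref{eq:rep2} is attained with $|y_{t,x}-x|\leqslant\mu(t)t$ (the paper's Lemma~\ref{inf_min}), (ii) a dynamic programming principle (the paper states it at the level of the fundamental solution, Lemma~\ref{dyn_prog}, rather than for $u$ itself, but the content is the same), and (iii) the subsolution/supersolution test using straight-line comparison curves and genuine $C^2$ minimizers respectively, followed by the same two-sided argument for the initial condition. The only cosmetic difference is that you insert a preliminary local Lipschitz step for $u$, which the paper omits since continuity of $u$ is not needed for the viscosity test.
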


We will postpone the proof of Theorem \ref{representation_formula_2}. To verify that the infimum in \eqref{eq:rep2} is indeed a minimum, we want to show the boundedness of the set
\begin{equation}\label{eq:sublevel}
	\Lambda_t^x:=\{y\in\R^n: A(t,y,x,\phi(y))-A(t,x,x,\phi(x))\leqslant 0\}.
\end{equation}
For this purpose we need a refinement of Lemma \ref{bound_u}. Notice that when one works on a closed manifold instead of $\R^n$, at least the the infimum can be achieved automatically. But a quantitative estimate on the size of ball containing $\Lambda_t^x$ has its own interest.

\begin{Lem}\label{bound_fundamental_solution}
	Suppose that $L$ satisfies conditions \mbox{\rm (L1)-(L3)}. Given $x,y\in\R^n$, $t>0$ and $u\in\R$. Then we have that
	\begin{align*}
		A(t,x,y,u)\geqslant e^{Kt}u+e^{-Kt}\int^t_0\theta_0(|\dot{\xi}(s|)\ ds-c_0te^{Kt},\quad\text{if}\quad u\leqslant 0,
	\end{align*}
	and 
	\begin{align*}
		A(t,x,y,u)\geqslant e^{-Kt}u+e^{-Kt}\int^t_0\theta_0(|\dot{\xi}(s|)\ ds-c_0te^{Kt},\quad\text{if}\quad u\geqslant 0.
	\end{align*}
	Moreover, for $C=\overline{\theta}_0(0)$, we obtain that
	\begin{align*}
		A(t,x,x,u)\leqslant e^{-Kt}u+Cte^{Kt},\quad\text{if}\quad u\leqslant 0,
	\end{align*}
	and
	\begin{align*}
		A(t,x,x,u)\leqslant e^{Kt}u+Cte^{Kt},\quad\text{if}\quad u\geqslant 0.
	\end{align*}
\end{Lem}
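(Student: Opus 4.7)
The plan is to reduce the Carath\'eodory equation \eqref{eq:caratheodory_L} to an explicit linear ODE via a mean-value linearization of $L$ in its $u$-slot. For any $\xi\in\Gamma^t_{x,y}$ with $u_\xi$ determined by \eqref{eq:caratheodory_L}, I will introduce
\begin{equation*}
R_\xi(s):=\int_0^1 L_u(\xi(s),\sigma u_\xi(s),\dot\xi(s))\,d\sigma,
\end{equation*}
so that the fundamental theorem of calculus gives the identity $L(\xi,u_\xi,\dot\xi)=L_0(\xi,\dot\xi)+R_\xi(s)\,u_\xi(s)$, while condition (L3) yields the uniform bound $|R_\xi(s)|\leq K$. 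Thus $u_\xi$ solves the linear inhomogeneous ODE $\dot u_\xi=R_\xi u_\xi+L_0(\xi,\dot\xi)$ with $u_\xi(0)=u$, and the integrating-factor method produces the closed form
\begin{equation*}
u_\xi(t)=e^{\int_0^t R_\xi\,d\tau}\,u+\int_0^t e^{\int_s^t R_\xi\,d\tau}\,L_0(\xi(s),\dot\xi(s))\,ds.
\end{equation*}
Since $|R_\xi|\leq K$, both exponential factors lie in $[e^{-Kt},e^{Kt}]$.

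For the lower bounds I will split $L_0=(\theta_0(|\dot\xi|)-c_0)+(L_0-\theta_0(|\dot\xi|)+c_0)$, the last summand being non-negative by (L2). Estimating the non-negative $\theta_0$ part from below using $e^{\int_s^t R_\xi}\geq e^{-Kt}$, the constant $-c_0$ part from below using $e^{\int_s^t R_\xi}\leq e^{Kt}$, and dropping the non-negative remainder gives
\begin{equation*}
\int_0^t e^{\int_s^t R_\xi\,d\tau}L_0\,ds\;\geq\;e^{-Kt}\int_0^t \theta_0(|\dot\xi(s)|)\,ds-c_0 t e^{Kt}.
\end{equation*}
For the leading term $e^{\int_0^t R_\xi\,d\tau}u$, the sign of $u$ selects which of $e^{\pm Kt}$ delivers a valid lower estimate: $e^{Kt}u$ when $u\leq 0$ and $e^{-Kt}u$ when $u\geq 0$. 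Combining these and evaluating at the minimizer $\xi$ guaranteed by Proposition~\ref{Herglotz_Lie} (so that $A(t,x,y,u)=u_\xi(t)$) produces the two lower bounds asserted.

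For the upper bounds on $A(t,x,x,u)$ I will simply test the infimum against the constant curve $\xi(s)\equiv x$ on $[0,t]$, for which $\dot\xi\equiv 0$ and $L_0(x,0)\leq\overline\theta_0(0)=C$ by (L2). The integral term is then at most $C\int_0^t e^{K(t-s)}\,ds\leq Cte^{Kt}$, while the leading term contributes $e^{-Kt}u$ or $e^{Kt}u$ according as $u\leq 0$ or $u\geq 0$. Since this is a valid test curve, $A(t,x,x,u)\leq u_\xi(t)$ inherits both upper bounds.

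The only ingredient beyond bookkeeping is the mean-value linearization; it is forced because the bound in (L2) depends on the fiber coordinate $r$, which would otherwise vary with $s$ along $u_\xi$ and obstruct a clean estimate in terms of $\theta_0$ and $c_0$ alone. I do not anticipate any deeper obstacle — the remaining work is just careful signed bookkeeping matching the direction of each exponential bound to the sign of the quantity it multiplies.
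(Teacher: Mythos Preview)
Your argument is correct and takes a genuinely different route from the paper. The paper partitions $[0,t]$ according to the sign of $u_\xi$, into $E_0=\{u_\xi=0\}$, $E_+=\{u_\xi>0\}$, $E_-=\{u_\xi<0\}$; on $E_\pm$ it uses the sign-dependent differential inequalities $\dot u_\xi\geq L_0(\xi,\dot\xi)\mp Ku_\xi$ coming from \eqref{eq:DeltaL} together with the integrating factors $e^{\pm Ks}$, and on $E_0$ it invokes the measure-theoretic fact that $\dot u_\xi=0$ a.e.\ on a level set; the three contributions are then recombined. Your mean-value linearization $L(\xi,u_\xi,\dot\xi)=L_0(\xi,\dot\xi)+R_\xi u_\xi$ bypasses this decomposition entirely: it yields a single \emph{exact} linear ODE whose solution is the closed-form you wrote, so the estimates reduce to the uniform two-sided bound $e^{-Kt}\leq e^{\int R_\xi}\leq e^{Kt}$ paired with the sign of each factor it multiplies. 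This is shorter and avoids both the sign-based case analysis and the a.e.\ differentiability lemma; the paper's approach, by contrast, works at the level of differential inequalities and would extend more readily to Lagrangians that are merely Lipschitz (rather than $C^1$) in the $u$-slot.
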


\begin{Rem}
Notice that if $L$ satisfies conditions (L1)-(L3), then we have that, for any $(x,u,v)\in\R^n\times\R\times\R^n$, we have that
\begin{equation}\label{eq:DeltaL}
	\begin{split}
		-Ku\leqslant&\,L(x,u,v)-L(x,0,v)\leqslant Ku,\quad u\geqslant0,\\
	   Ku\leqslant&\,L(x,u,v)-L(x,0,v)\leqslant -K u,\quad u\leqslant0.
	\end{split}
\end{equation}	
\end{Rem}

\begin{proof}
	Given $x,y\in\R^n$, $t>0$ and $u\in\R$. Let $\xi\in\Gamma^t_{x,y}$ be a minimizer for $A(t,x,y,u)$ where $u_{\xi}$ is determined by \eqref{eq:caratheodory_L} with initial condition $u_{\xi}(0)=u$. Set $E_0=\{s\in(0,t): u_{\xi}(s)=0\}$, $E_+=\{s\in(0,t): u_{\xi}(s)>0\}$ and $E_-=\{s\in(0,t): u_{\xi}(s)<0\}$, then we have that $E_+$ (resp. $E_-$) is a union of a finite or countable collection of open intervals $\{(a_i,b_i)\}$ (resp. $\{(c_j,d_j)\}$) which mutually intersect empty. Notice that if $E_+$ (resp. $E_-$) is nonempty then $u_{\xi}(a_i)=u_{\xi}(b_i)=0$ (resp. $u_{\xi}(c_j)=u_{\xi}(d_j)=0$) unless $a_i=0$ or $b_i=t$ (resp. $c_j=0$ or $d_j=t$). Since $\dot{u}_{\xi}(s)=0$ for almost all $s\in E_0$\footnote{See, for instance, Theorem 19 in \cite{Varberg}.}, then we have that
	\begin{equation}\label{eq:E_0}
		0=\int_{E_0}\dot{u}_{\xi}\ ds=\int_{E_0}L_0(\xi,\dot{\xi})\geqslant\int_{E_0}\theta_0(|\dot{\xi}(s)|)\ ds-c_0|E_0|.
	\end{equation}
	Due to \eqref{eq:DeltaL}, $\dot{u}_{\xi}(s)\geqslant L_0(\xi(s),\dot{\xi}(s))-Ku_{\xi}(s)$ for all $s\in E_+$, thus
	\begin{equation}\label{eq:E_+}
		\begin{split}
			e^{K\bar{b}}u_{\xi}(\bar{b})-e^{K\bar{a}}u_{\xi}(\bar{a})=&\,\int_{E_+}\frac d{ds}\{e^{Ks}u_{\xi}(s)\}\ ds\geqslant\int_{E_+}e^{Ks}L_0(\xi(s),\dot{\xi}(s))\ ds\\
		\geqslant&\,\int_{E_+}\theta_0(|\dot{\xi}(s)|)\ ds-c_0|E_+|e^{K\bar{b}}
		\end{split}
	\end{equation}
	where $\bar{a}=\inf_ia_i$ and $\bar{b}=\sup_ib_i$. Similarly, we also have that $\dot{u}_{\xi}(s)\geqslant L_0(\xi(s),\dot{\xi}(s))+Ku_{\xi}(s)$ for all $s\in E_-$
	\begin{equation}\label{eq:E_-}
		\begin{split}
			e^{-K\bar{d}}u_{\xi}(\bar{d})-e^{-K\bar{c}}u_{\xi}(\bar{c})=&\,\int_{E_-}\frac d{ds}\{e^{-Ks}u_{\xi}(s)\}\ ds\geqslant\int_{E_-}e^{-Ks}L_0(\xi(s),\dot{\xi}(s))\ ds\\
		\geqslant&\,e^{-Kt}\int_{E_-}\theta_0(|\dot{\xi}(s)|)\ ds-c_0|E_-|
		\end{split}
	\end{equation}
	where $\bar{c}=\inf_jc_j$ and $\bar{d}=\sup_jd_j$.
	
	Now, invoking \eqref{eq:E_0}, \eqref{eq:E_+} and \eqref{eq:E_-}, we have the following relations (which is not optimal but it is enough for our purpose): if $u\leqslant 0$,  
	\begin{align*}
		u_{\xi}(t)\geqslant e^{Kt}u+e^{-Kt}\int^t_0\theta_0(|\dot{\xi}(s|)\ ds-c_0te^{Kt},
	\end{align*}
	and, if $u\geqslant 0$, we have that
	\begin{align*}
		u_{\xi}(t)\geqslant e^{-Kt}u+e^{-Kt}\int^t_0\theta_0(|\dot{\xi}(s|)\ ds-c_0te^{Kt}.
	\end{align*}
	This completes the proof of the first part of the lemma.
	
	Now, let $y=x$ and $\xi(s)\equiv x$ for all $s\in[0,t]$. Recalling $|L(x,0)|\leqslant C$, we have that
	\begin{equation}\label{eq:E_02}
		0=\int_{E_0}\dot{u}_{\xi}\ ds=\int_{E_0}L_0(\xi,\dot{\xi})\ ds=\int_{E_0}L_0(x,0)\ ds\leqslant C|E_0|.
	\end{equation}
	By \eqref{eq:DeltaL}, we obtain that
	\begin{equation}\label{eq:E_+2}
		\begin{split}
			e^{-K\bar{b}}u_{\xi}(\bar{b})-e^{-K\bar{a}}u_{\xi}(\bar{a})=&\,\int_{E_+}\frac d{ds}\{e^{-Ks}u_{\xi}(s)\}\ ds\leqslant\int_{E_+}e^{-Ks}L_0(x,0)\ ds\\
		\leqslant&\,C|E_+|,
		\end{split}
	\end{equation}
	and 
	\begin{equation}\label{eq:E_-2}
		\begin{split}
			e^{K\bar{d}}u_{\xi}(\bar{d})-e^{K\bar{c}}u_{\xi}(\bar{c})=&\,\int_{E_-}\frac d{ds}\{e^{Ks}u_{\xi}(s)\}\ ds\leqslant\int_{E_-}e^{Ks}L_0(x,0)\ ds\\
		\leqslant&\,C|E_-|e^{K\bar{d}}
		\end{split}
	\end{equation}
	Invoking \eqref{eq:E_02}, \eqref{eq:E_+2}, \eqref{eq:E_-2}, we obtain that
	\begin{align*}
		u_{\xi}(t)\leqslant e^{-Kt}u+Cte^{Kt}
	\end{align*}
	if $u\leqslant0$, and
	\begin{align*}
		u_{\xi}(t)\leqslant e^{Kt}u+Cte^{Kt}
	\end{align*}
	if $u\geqslant0$. This completes the proof the lemma.
\end{proof}

\begin{Lem}\label{inf_min}
	Suppose that $L$ satisfies conditions \mbox{\rm (L1)-(L3)} and $\phi$ is a real-valued bounded and Lipschitz function on $\R^n$. Then the infimum in \eqref{eq:rep2} is attained for every $(t,x)\in(0,+\infty)\times\R^n$. Moreover, there exists $\mu(t)>0$ depending on $\mbox{\rm Lip}\,(\phi)$, such that, for any $(t,x)\in(0,+\infty)\times\R^n$ and any minimal point $y_{t,x}$ of $A(t,\cdot,x,\phi(\cdot))$, we have
	\begin{equation}\label{eq:inf_min}
		|y_{t,x}-x|\leqslant\mu(t)t
	\end{equation}
	where $\mu(t)=Ce^{2Kt}+e^{-2Kt}\theta^*_0(e^{2Kt}(\mbox{\rm Lip}\,(\phi)+1))$ with $C>0$.
\end{Lem}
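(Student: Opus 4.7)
The goal is two-fold: prove existence of a minimizer for $y \mapsto A(t,y,x,\phi(y))$ and give the quantitative bound $|y_{t,x}-x| \leq \mu(t)t$. I would reduce both to showing that the sub-level set $\Lambda_t^x$ from \eqref{eq:sublevel} is contained in a suitable ball. Existence then follows because $y \mapsto A(t,y,x,\phi(y))$ is continuous (standard continuity of value functions in the terminal data, composed with the continuous $\phi$), and a continuous function on a bounded closed set attains its infimum.

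For the quantitative estimate, the strategy is to sandwich $A(t,y,x,\phi(y))$ between two applications of Lemma \ref{bound_fundamental_solution}. On the one hand, using the constant curve $\xi \equiv x$ together with the ``moreover'' part of that lemma yields, for $C = \overline{\theta}_0(0)$,
\[
A(t,x,x,\phi(x)) \leqslant e^{Kt}|\phi(x)| + Cte^{Kt}.
\]
On the other hand, for any $y$ and any admissible $\xi$ the first part of the lemma, written uniformly in the sign of $\phi(y)$, gives
\[
A(t,y,x,\phi(y)) \geqslant -e^{Kt}|\phi(y)| + e^{-Kt}\int_0^t \theta_0(|\dot{\xi}(s)|)\,ds - c_0 t e^{Kt}.
\]
The crucial step is to convert the kinetic term into a bound linear in $|y-x|$. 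Since $\theta_0$ is superlinear and nondecreasing, its Legendre--Fenchel conjugate $\theta_0^*(\lambda) := \sup_{r \geqslant 0}\{\lambda r - \theta_0(r)\}$ is finite for every $\lambda \geqslant 0$, and by definition $\theta_0(r) \geqslant \lambda r - \theta_0^*(\lambda)$. Integrating along $\xi$ and using $\int_0^t |\dot{\xi}|\,ds \geqslant |y-x|$ yields
\[
\int_0^t \theta_0(|\dot{\xi}(s)|)\,ds \geqslant \lambda|y-x| - t\theta_0^*(\lambda).
\]

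Combining these inequalities with the defining condition $A(t,y,x,\phi(y)) \leqslant A(t,x,x,\phi(x))$ for $y \in \Lambda_t^x$ and invoking the Lipschitz bound $|\phi(y)| \leqslant |\phi(x)| + \mathrm{Lip}(\phi)|y-x|$ gives, after multiplying through by $e^{Kt}$ and rearranging,
\[
\bigl(\lambda - e^{2Kt}\mathrm{Lip}(\phi)\bigr)|y-x| \leqslant t\theta_0^*(\lambda) + (\text{terms bounded in } t\text{ via }\|\phi\|_\infty,\,C,\,c_0).
\]
Choosing $\lambda = e^{2Kt}(\mathrm{Lip}(\phi)+1)$ leaves a coefficient $e^{2Kt}$ on the left, and dividing through produces exactly a bound of the form $|y-x| \leqslant \mu(t)\,t$ with $\mu(t) = Ce^{2Kt} + e^{-2Kt}\theta_0^*\!\bigl(e^{2Kt}(\mathrm{Lip}(\phi)+1)\bigr)$, absorbing $\|\phi\|_\infty$ and the other $t$-independent contributions into the constant $C$. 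The main technical obstacle is the bookkeeping caused by the sign dichotomy in Lemma \ref{bound_fundamental_solution} (which forces the weaker unified estimate $-e^{Kt}|\phi(y)|$ on the lower side) and the precise choice of the dual parameter $\lambda$ so that the Lipschitz contribution of $\phi$ is exactly absorbed; everything else is rearrangement.
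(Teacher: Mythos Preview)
Your overall strategy matches the paper's: compare the lower bound for $A(t,y,x,\phi(y))$ with the upper bound for $A(t,x,x,\phi(x))$ from Lemma~\ref{bound_fundamental_solution}, then use the Fenchel--Young inequality $\theta_0(r)\geqslant \lambda r-\theta_0^*(\lambda)$ together with $\int_0^t|\dot\xi|\geqslant|y-x|$, and finally choose the dual parameter to absorb the Lipschitz contribution of $\phi$. The choice $\lambda=e^{2Kt}(\mathrm{Lip}(\phi)+1)$ is exactly what the paper does (with the name $k$).

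However, there is a genuine gap in the step where you ``absorb $\|\phi\|_\infty$ into the constant $C$''. With your unified lower bound $A(t,y,x,\phi(y))\geqslant -e^{Kt}|\phi(y)|+\cdots$ and upper bound $A(t,x,x,\phi(x))\leqslant e^{Kt}|\phi(x)|+Cte^{Kt}$, the comparison produces a term $e^{Kt}(|\phi(y)|+|\phi(x)|)$. Using $|\phi(y)|\leqslant|\phi(x)|+\mathrm{Lip}(\phi)|y-x|$ leaves $2e^{Kt}|\phi(x)|$ on the right. After the algebra you outline, this yields
\[
|y-x|\leqslant 2\|\phi\|_\infty+\mu(t)\,t,
\]
not $|y-x|\leqslant \mu(t)t$. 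The additive $2\|\phi\|_\infty$ is \emph{not} multiplied by $t$, so it cannot be absorbed into $C$; your bound gives a $\mu(t)$ that blows up like $t^{-1}$ as $t\to0^+$, whereas the stated $\mu(t)$ stays bounded. This matters downstream: the paper uses $|y_{t,x}-x|\leqslant\mu(t)t\to0$ to verify the initial condition $u(t,x)\to\phi(x)$.

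The missing idea is precisely the sign-by-sign bookkeeping you dismissed. In the case $\phi(y),\phi(x)\leqslant 0$ (which the paper identifies as the hardest), Lemma~\ref{bound_fundamental_solution} gives the difference $e^{Kt}\phi(y)-e^{-Kt}\phi(x)$ with the \emph{actual} signs, and one writes
\[
e^{Kt}\phi(y)-e^{-Kt}\phi(x)=e^{Kt}\bigl(\phi(y)-\phi(x)\bigr)+(e^{Kt}-e^{-Kt})\phi(x).
\]
The first summand is controlled by $-e^{Kt}\mathrm{Lip}(\phi)|y-x|$; the second is bounded below by $-\|\phi\|_\infty\,e^{Kt}(1-e^{-2Kt})\geqslant -C_2te^{Kt}$ since $1-e^{-2Kt}\leqslant C_1 t$. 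Thus the $\|\phi\|_\infty$ contribution becomes $O(t)$ and \emph{can} be absorbed into $C$. Passing to the absolute value $|\phi|$ on both sides destroys this cancellation, and that is why the unified estimate is too crude here. The remaining sign cases are handled analogously and in fact give slightly better constants.
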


\begin{proof}
    For any $x,y\in\R^n$ and $t>0$. Let $\xi_y\in\Gamma^t_{y,x}$ be a minimizer of $A(t,y,x,\phi(y))$ and $u_{\xi_y}$ be the unique solution of \eqref{eq:caratheodory_L} with initial condition $u_{\xi_y}(0)=\phi(y)$. Based on the estimates of the lower bound of $A(t,y,x,\phi(y))$ and upper bound of $A(t,x,x,\phi(x))$ in Lemma \ref{bound_fundamental_solution}, we have to deal with estimate of the lower bound of $e^{Kt}\phi(y)-e^{-Kt}\phi(x)$ when $\phi(y),\phi(x)\leqslant0$ which is the most difficult case. Indeed, we have that
    \begin{align*}
    	e^{Kt}\phi(y)-e^{-Kt}\phi(x)=&\,e^{Kt}(\phi(y)-\phi(x))+(e^{Kt}-e^{-Kt})\phi(x)\\
    	\geqslant&\,-e^{Kt}\,\text{Lip}\,(\phi)|y-x|-C_0e^{Kt}(1-e^{-2Kt})
    \end{align*}
    where $C_0=\sup_{x\in\R^n}|\phi(x)|$. Since there exists $C_1>0$ such that $1-e^{-2Kt}\leqslant C_1t$ for all $t\geqslant 0$. Therefore, for $C_2=C_0C_1$, we conclude
    \begin{equation}\label{eq:negative}
    	e^{Kt}\phi(y)-e^{-Kt}\phi(x)\geqslant -e^{Kt}\,\text{Lip}\,(\phi)|y-x|-C_2te^{Kt}.
    \end{equation}
        
    Now, suppose that $\phi(y)\leqslant0$ and $\phi(x)\leqslant0$, then by Lemma \ref{bound_fundamental_solution} and \eqref{eq:negative}, we have that
	\begin{align*}
		&\,A(t,y,x,\phi(y))-A(t,x,x,\phi(x))\\
		\geqslant&\,e^{Kt}\phi(y)+e^{-Kt}\int^t_0\theta_0(|\dot{\xi}(s|)\ ds-c_0te^{Kt}-e^{-Kt}\phi(x)-Cte^{Kt}\\
		=&\,-e^{Kt}\,\text{Lip}\,(\phi)|y-x|+e^{-Kt}\int^t_0\theta_0(|\dot{\xi}(s|)\ ds-(c_0+C_2+C)te^{Kt}.
	\end{align*}
	Therefore, for any $k>0$, we have that
	\begin{align*}
		&\,e^{-Kt}[A(t,y,x,\phi(y))-A(t,x,x,\phi(x))]\\
		\geqslant&\,-\mbox{\rm Lip}\,(\phi)|y-x|+e^{-2Kt}\int^t_0\theta_0(|\dot{\xi}(s)|)\ ds-(c_0+C_2+C)t\\
		\geqslant&\,-\mbox{\rm Lip}\,(\phi)|y-x|+k\int^t_0|\dot{\xi}(s)|\ ds-(c_0+C_2+C+e^{-2Kt}\theta^*_0(ke^{2Kt}))t\\
		\geqslant&\,-\mbox{\rm Lip}\,(\phi)|y-x|+k|y-x|-(c_0+C_2+C+e^{-2Kt}\theta^*_0(ke^{2Kt}))t
	\end{align*}
	Choosing $k=\text{Lip}\,(\phi)+1$ and taking
	\begin{align*}
		\mu(t)=c_0+C_2+C+e^{-2Kt}\theta^*_0(e^{2Kt}(\text{Lip}\,(\phi)+1)),
	\end{align*}
	we have that the set $\Lambda_t^x$ defined in \eqref{eq:sublevel} is contained in $\overline{B(x,\mu(t)t)}$. Therefore $\Lambda_t^x$ is compact and the infimum in \eqref{eq:rep2} is indeed minimum. Moreover, \eqref{eq:inf_min} is a consequence of \eqref{eq:sublevel}. The other cases can be dealt with in a similar way. Indeed, 
	\begin{align*}
		\mu(t)=e^{2Kt}\left(c_0+C_2+C \right)+\theta^*_0((\text{Lip}\,(\phi)+1)),\quad&\,\text{if}\ \phi(y)\geqslant 0, \phi (x)\geqslant 0,\\
		\mu(t)=c_0+C+e^{-2Kt}\theta^*_0(e^{2Kt}(\text{Lip}\,(\phi)+1)),\quad&\,\text{if}\ \phi(y)\leqslant 0, \phi (x)\geqslant 0,\\
		\mu(t)=e^{2Kt}\left(c_0+C \right)+\theta^*_0((\text{Lip}\,(\phi)+1)),\quad&\,\text{if}\ \phi(y)\geqslant 0, \phi (x)\leqslant 0.
	\end{align*}
	This completes the proof.
\end{proof}

\begin{Rem}
It is not clear whether Lemma \ref{inf_min} holds true without the assumption that $\phi$ is bounded in general, while it does for the Lagrangian in the form $L(x,u,v)=-\lambda u+L_0(x,v)$, $\lambda>0$, which is the Lagrangian with respect to the well known discounted Hamiltonian (see, for instance, \cite{DFIZ}). Lemma \ref{inf_min} not only ensures that the infimum in \eqref{eq:rep2} is indeed minimum if $\phi$ is a bounded and Lipschitz continuous function on $\R^n$, but also plays an essential part for the applications to the study of the global propagation of singularities of the associated Hamilton-Jacobi equations (\cite{Cannarsa-Cheng3}, \cite{CCF}, \cite{Chen-Cheng} and \cite{Cannarsa-Cheng4}).
\end{Rem}

\begin{Lem}\label{dyn_prog}
Let $x,y\in\R^n$, $t>0$ and $u\in\R$. For any $\xi\in\Gamma^t_{x,y}$ being a minimizer of \eqref{eq:fundamental_solution}, we denote by $u_{\xi}(s,u)$ the unique solution of \eqref{eq:caratheodory_L} with $u_{\xi}(0,u)=u$. Then, for any $0<t'<t$, the restriction of $\xi$ on $[0,t']$ is a minimizer for 
  $$
  u+\inf\int^{t'}_0L(\xi(s).u_{\xi}(s),\dot{\xi}(s))\ ds
  $$
  with $u_{\xi}$ the unique solution of \eqref{eq:caratheodory_L} restricted on $[0,t']$. Moreover,
  \begin{equation}\label{eq:implcite_2}
  	A(s,x,\xi(s),u)=u_{\xi}(s,u),\quad\forall s\in[0,t],
  \end{equation}
  and $A(s_1+s_2,x,\xi(s_1+s_2),u)=A(s_2,\xi(s_1),\xi(s_1+s_2),u_{\xi}(s_1))$ for any $s_1,s_2>0$ and $s_1+s_2\leqslant t$.	
\end{Lem}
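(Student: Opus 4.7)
The plan is to establish all three claims via the standard dynamic programming (cutting-and-pasting) argument. The essential non-classical ingredient is a monotonicity principle for solutions of the Carath\'eodory equation \eqref{eq:caratheodory_L} with respect to their initial data, which replaces the simple additivity of the action in the Tonelli case.

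First I will record the following comparison principle: for any absolutely continuous $\zeta:[a,b]\to\R^n$, the ODE $\dot w(s)=L(\zeta(s),w(s),\dot\zeta(s))$ has right-hand side Lipschitz in $w$ with constant $K$, by condition (L3). Consequently, if $w_1,w_2$ are two solutions with $w_1(a)<w_2(a)$, then $w_1(s)<w_2(s)$ for every $s\in[a,b]$; this follows from Gronwall's inequality applied to $w_2-w_1$.

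Second, for the first assertion I argue by contradiction. Assume there exists $\eta\in\Gamma^{t'}_{x,\xi(t')}$ whose associated Carath\'eodory solution $u_\eta$ (starting from $u$ at time $0$) satisfies $u_\eta(t')<u_\xi(t',u)$. Concatenate $\eta$ on $[0,t']$ with $\xi|_{[t',t]}$ to obtain a curve $\tilde\xi\in\Gamma^t_{x,y}$, and let $u_{\tilde\xi}$ denote the solution of \eqref{eq:caratheodory_L} along $\tilde\xi$ with initial value $u$. Then $u_{\tilde\xi}(t')=u_\eta(t')<u_\xi(t')$, and applying the comparison principle to the two solutions $u_{\tilde\xi}$ and $u_\xi$ of the ODE driven by the common curve $\xi|_{[t',t]}$ yields $u_{\tilde\xi}(t)<u_\xi(t)$. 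Since $A(t,x,y,u)\leqslant u_{\tilde\xi}(t)$ by definition while $u_\xi(t)=A(t,x,y,u)$ by minimality of $\xi$, this is a contradiction. Identity \eqref{eq:implcite_2} is then an immediate corollary: the first assertion applied at time $s$ shows that $\xi|_{[0,s]}$ is a minimizer for $A(s,x,\xi(s),u)$, and the value it realizes along the Carath\'eodory equation is exactly $u_\xi(s,u)$. The semigroup identity reduces to showing that $\xi|_{[s_1,s_1+s_2]}$ is a minimizer of the fundamental solution problem from $\xi(s_1)$ to $\xi(s_1+s_2)$ in time $s_2$ with initial value $u_\xi(s_1)$; this is proved by the same concatenation argument, now pasting an alleged better competitor on $[s_1,s_1+s_2]$ between $\xi|_{[0,s_1]}$ and $\xi|_{[s_1+s_2,t]}$ and invoking the comparison principle on $[s_1+s_2,t]$. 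Once established, \eqref{eq:implcite_2} applied to the restricted problem gives the desired identity.

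The main obstacle is precisely the comparison principle: without the uniform Lipschitz bound on $L$ in the $u$-variable provided by (L3), one cannot propagate a strict decrease at the splitting time forward to the terminal time, and the classical pasting argument would break down. With (L3) at hand the argument is essentially mechanical, but the bookkeeping of $u_\xi$ along the concatenated curves is what distinguishes this setting from the usual Tonelli one.
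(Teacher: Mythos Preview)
Your proof is correct and follows the same cutting-and-pasting (dynamic programming) approach as the paper. You are in fact more explicit than the paper: its proof simply asserts that $\xi_1$ and $\xi_2$ are minimal ``by summing up the equalities above and the assumption that $\xi$ is a minimizer,'' leaving implicit the monotonicity step for the Carath\'eodory ODE (your comparison principle) that is needed to propagate a strict improvement on $[0,t']$ forward to time $t$.
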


\begin{proof}
	Suppose $x,y\in\R^n$, $t>0$ and $u\in\R$. Let $\xi\in\Gamma^t_{x,y}$ be a minimizer of \eqref{eq:fundamental_solution} and $u_{\xi}(s)=u_{\xi}(s;u)$ be the unique solution of \eqref{eq:caratheodory_L} with $u_{\xi}(0,u)=u$.
	
	Now, let $0<t'<t$. Let $\xi_1\in\Gamma^{t'}_{x,\xi(t')}$ and $\xi_2\in\Gamma^{t-t'}_{\xi(t'),y}$ be the restriction of $\xi$ on $[0,t']$ and $[t',t]$ respectively. Then, we have that
	\begin{align*}
		u_{\xi}(t';u)=&\,u+\int^{t'}_0L(\xi_1(s).u_{\xi_1}(s),\dot{\xi_1}(s))\ ds,\\
		u_{\xi}(t;u)-u_{\xi}(t';u)=&\,\int^t_{t'}L(\xi_2(s).u_{\xi_2}(s),\dot{\xi_2}(s))\ ds.
	\end{align*}
	Then both $\xi_1$ and $\xi_2$ are minimal curve for \eqref{eq:fundamental_solution} restricted on $[0,t']$ and $[t',t]$ respectively by summing up the equalities above and the assumption that $\xi$ is a minimizer of \eqref{eq:fundamental_solution}. In particular, \eqref{eq:implcite_2} follows. The last assertion is direct from the relation 
	$$
	u_{\xi}(s_1+s_2;u)=u_{\xi}(s_2;u_{\xi}(s_1)),\quad \forall s_1,s_2>0,\ s_1+s_2\leqslant t,
	$$
	since $u_{\xi}$ solves \eqref{eq:caratheodory_L}.
\end{proof}

\begin{proof}[Proof of Theorem \ref{representation_formula_2}] 
	We first prove $u$ is a subsolution. Fix $(t_0,x_0)\in(0,+\infty)\times\R^n$. Let $\varphi$ be a $C^1$ test function such that $(t_0,x_0)$ is a local maximal point of $u-\varphi$. That is
	\begin{align*}
		\varphi(t_0,x_0)-\varphi(t,x)\leqslant u(t_0,x_0)-u(t,x),\quad (t,x)\in U,
	\end{align*}
	with $U$ be an open neighborhood of $(t_0,x_0)$ in $\R^n$. Given $v\in\R^n$, then for any $C^1$ curve $\xi:[t_1,t_2]\to U$ with $t_1<t_0<t_2$, $\xi(t_0)=x_0$ and $\dot{\xi}(t_0)=v$, we have that, for any $t \in (t_1,t_2)$, 
	\begin{align*}
	\varphi(t_2,\xi(t_2))-\varphi(t_1,\xi(t_1))\leqslant u(t_2,\xi(t_2))-u(t_1,\xi(t_1)).
	\end{align*}
	Denote $x_i=\xi(t_i)$, $i=1,2$. By Lemma \ref{inf_min}, there exists $y\in\R^n$ and a $C^2$ minimal curve $\eta\in\Gamma^{t_1}_{y,x_1}$ such that
	\begin{align*}
		u(t_1,x_1)=\phi(y)+\int^{t_1}_0L(\eta,u_{\eta},\dot{\eta})\ ds.
	\end{align*}
	For any absolutely continuos curve $\gamma:[t_1,t_2]\to U$ connecting $x_1$ to $x_2$, we define $\eta^*$ on $[0,t_2]$ (which connects $y$ to $x_2$) being the juxtaposition of $\eta$ and $\gamma$. Then,
	\begin{align*}
		u(t_2,x_2)-u(t_1,x_1)\leqslant&\,\left(\phi(y)+\int^{t_2}_0L(\eta^*,u_{\eta^*},\dot{\eta}^*)\ ds\right)-\left(\phi(y)+\int^{t_1}_0L(\eta,u_{\eta},\dot{\eta})\ ds\right)\\
		=&\,\int^{t_2}_{t_1}L(\gamma,u_{\gamma},\dot{\gamma})\ ds.
	\end{align*}
	It follows
	\begin{align*}
		\varphi(t_2,\xi(t_2))-\varphi(t_1,\xi(t_1))\leqslant\int_{t_1}^{t_2}L(\xi,u_{\xi},\dot{\xi})\ ds.
	\end{align*}
	By letting $|t_2-t_1|\to 0$, this gives rise to
	\begin{align*}
		\partial_t \varphi(t_0,x_0)+\partial_x\varphi(t_0,x_0)\cdot v\leqslant L(x_0,u_{\xi}(t_0),v),\quad v\in\R^n.
	\end{align*}
	As an application of Fenchel-Legendre dual and since $u(t_0,x_0)=u_{\xi}(t_0)$, we obtain
	\begin{align*}
		\partial_t \varphi(t_0,x_0)+H(x_0,u(t_0,x_0),\partial_x\varphi(t_0,x_0))\leqslant 0,
	\end{align*}
	which shows that $u$ is a subsolution.
	
	Now we turn to the proof that $u$ is a supersolution. Let $\varphi$ be a $C^1$ test function such that $(t_0,x_0)$ is a local minimal point of $u-\varphi$. That is
	\begin{align*}
		\varphi(t_0,x_0)-\varphi(t,x)\geqslant u(t_0,x_0)-u(t,x),\quad (t,x)\in V,
	\end{align*}
	with $V$ be an open neighborhood of $(t_0,x_0)$ in $\R^n$. Due to Lemma \ref{inf_min} and Lemma \ref{dyn_prog}, there exists a $C^2$ curve $\xi:[t,t_0]\to V$ with $\xi(t_0)=x_0$ such that
	\begin{align*}
		u(t_0,\xi(t_0))-u(t,\xi(t))=u_{\xi}(t_0)-u_{\xi}(t)=\int^{t_0}_{t}L(\xi,u_{\xi},\dot{\xi})\ ds.
	\end{align*}
	Hence
	\begin{align*}
		\varphi(t_0,\xi(t_0))-\varphi(t,\xi(t))\geqslant\int^{t_0}_{t}L(\xi,u_{\xi},\dot{\xi})\ ds.
	\end{align*}
	It follows that
	\begin{align*}
		\partial_t \varphi(t_0,x_0)+\partial_x\varphi(t_0,x_0)\cdot\dot{\xi}(t_0)\geqslant L(x_0,u_{\xi}(t_0),\dot{\xi}(t_0)),
	\end{align*}
	which implies
	\begin{align*}
		\partial_t \varphi(t_0,x_0)+H(x_0,u(t_0,x_0),\partial_x\varphi(t_0,x_0))\geqslant 0,
	\end{align*}
	since $u(t_0,\xi(t_0))=u_{\xi}(t_0)$.
	
	Finally, fix $x\in\R^n$ and let $y_{t,x}$ be any minimizer as in Lemma \ref{inf_min}, we conclude that $\lim_{t\to0^+}y_{t,x}=x$. Thus,
	\begin{align*}
		u(t,x)-\phi(x)=\phi(y_{t,x})-\phi(x)+\int^t_0L(\xi,u_{\xi},\dot{\xi})\ ds
	\end{align*}
	Therefore,
	\begin{align*}
		|u(t,x)-\phi(x)|\leqslant\text{Lip}\,(\phi)|y_{t,x}-x|+t\max_{s\in[0,t]}|L(\xi(s),u_{\xi}(s),\dot{\xi}(s))|.
	\end{align*}
	Since Proposition \ref{bound_u} and Proposition \ref{Lip} and $\phi$ is bounded, then, for $0<t\leqslant 1$, we conclude that, there exists $C_1>0$ independent of $x$, $t$ and $u$, such that
	\begin{align*}
		\max_{s\in[0,t]}\{|\xi(s)|,|u_{\xi}(s)|,|\dot{\xi}(s)|\}\leqslant C_1.
	\end{align*}
	It follows that there exists $C_2>0$ such that $\max_{s\in[0,t]}|L(\xi(s),u_{\xi}(s),\dot{\xi}(s))|\leqslant C_2$, and
	\begin{align*}
		|u(t,x)-\phi(x)|\leqslant\text{Lip}\,(\phi)|y_{t,x}-x|+tC_2.
	\end{align*}
	This leads our conclusion that $\lim_{t\to0^+}u(t,x)=\phi(x)$ and completes the proof. 
\end{proof}

\subsection{Vanishing contact structure}

Let $u^{\lambda}$ be the viscosity solution of \eqref{eq:intro_HJe} with respect to $H^{\lambda}$ defined by Herglotz' variational principle \eqref{eq:fundamental_solution} under the constrain \eqref{eq:caratheodory_L}. If $H_0$ is the Fenchel-Legendre dual of $L_0$, then $u$ defined by
\begin{equation}\label{eq:value_function_no_u}
	u(t,x)=\inf_{y\in\R^n}\{\phi(y)+A_t(y,x)\},\quad x\in\R^n, t>0,
\end{equation}
is a viscosity solution of 
\begin{equation}\label{eq:HJe1}\tag{HJ'$_e$}
\begin{cases}
	D_tu(t,x)+H_0(x,D_xu(t,x))=0,& (t,x)\in(0,+\infty)\times\R^n\\
	u(0,x)=\phi(x),& x\in\R^n,
\end{cases}
\end{equation}
where $A_t(x,y)$ is the fundamental solution or the least action with respect to $L_0$.

In this section, we will begin with an easier problem to show, for Cauchy problem, the vanishing discount problem mentioned in the introduction can be generalized to that of vanishing contact structure. 

\begin{Lem}\label{bound_u_2}
Suppose $L$ satisfies conditions \mbox{\rm (L1)-(L3)}. Given $x\in\R^n$, $t,R>0$, $u\in\R$ and $|y-x|\leqslant R$. If $\xi\in\Gamma^t_{y,x}$ is a minimizer for $A_t(y,x)$ and $u_{\xi}$ is determined by \eqref{eq:caratheodory_L} with respect to $L^\lambda$ and $\xi$, then we have that
\begin{align*}
	|u^\lambda_{\xi}(s)|\leqslant tF(t,R/t)+C(t)|u|+\exp(K_\lambda t)\cdot\int_0^t|L_\lambda(\xi,\dot{\xi})-L_0(\xi,\dot{\xi})|\ ds,\quad s\in[0,t].
\end{align*}
where $F(t,R/t)=(\overline{\theta}_0(R/t)+2c_0)\exp(K_\lambda t)$, $C(t)=tK_\lambda \exp(K_\lambda t)+1$
\end{Lem}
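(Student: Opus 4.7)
The plan is to combine a Gronwall-type inequality for the Carath\'eodory equation \eqref{eq:caratheodory_L} with an a priori control of the $L_0$-action of $\xi$ that comes from its being a minimizer for $A_t(y,x)$. First, using the affine competitor $\gamma(s)=y+s(x-y)/t$ (whose speed is bounded by $R/t$), condition \mbox{\rm (L2)} applied to $L_0$ gives
$$\int_0^t L_0(\xi,\dot{\xi})\,ds \leqslant \int_0^t L_0(\gamma,\dot{\gamma})\,ds \leqslant t\,\overline{\theta}_0(R/t),$$
and combined with the lower bound $L_0\geqslant -c_0$ this yields
$$\int_0^t |L_0(\xi,\dot{\xi})|\,ds \leqslant t\bigl(\overline{\theta}_0(R/t)+2c_0\bigr).$$

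Next I would turn \eqref{eq:caratheodory_L} into a scalar integral inequality for $|u_\xi^\lambda|$. Because $|L^\lambda_u|\leqslant K_\lambda$ by \mbox{\rm (L3')}, the mean value theorem in the $u$-variable gives
$$\bigl|L^\lambda(\xi,u_\xi^\lambda,\dot{\xi}) - L_\lambda(\xi,\dot{\xi})\bigr| \leqslant K_\lambda\,|u_\xi^\lambda|,$$
so integrating \eqref{eq:caratheodory_L} and passing to absolute values yields
$$|u_\xi^\lambda(s)| \leqslant |u| + \int_0^s|L_\lambda(\xi,\dot{\xi})|\,d\tau + K_\lambda\int_0^s|u_\xi^\lambda(\tau)|\,d\tau.$$
Gronwall's lemma then produces
$$|u_\xi^\lambda(s)| \leqslant e^{K_\lambda t}\Bigl(|u|+\int_0^t|L_\lambda(\xi,\dot{\xi})|\,d\tau\Bigr),\qquad s\in[0,t].$$

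Finally, I would split $|L_\lambda|\leqslant |L_\lambda-L_0|+|L_0|$, insert the bound from the first step, and recognize the constant $t(\overline{\theta}_0(R/t)+2c_0)e^{K_\lambda t}$ as $tF(t,R/t)$. To replace the coefficient $e^{K_\lambda t}$ of $|u|$ by $C(t)=1+tK_\lambda e^{K_\lambda t}$, I would use the elementary estimate $e^x-1=\int_0^x e^\sigma\,d\sigma\leqslant x e^x$ with $x=K_\lambda t$. The only delicate point is the Gronwall step: the original ODE $\dot{u}=L^\lambda(\xi,u,\dot{\xi})$ is nonlinear in $u$, but \mbox{\rm (L3')} is precisely what lets us dominate this nonlinearity by a linear one in $|u_\xi^\lambda|$, after which everything is routine.
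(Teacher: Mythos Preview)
Your proof is correct and follows essentially the same route as the paper: bound $\int_0^t|L_0(\xi,\dot\xi)|\,ds$ via the straight-line competitor and (L2), then use (L3') to linearize the Carath\'eodory equation and apply Gronwall. The only cosmetic difference is that the paper applies Gronwall to $|u_\xi^\lambda(s)-u|$ rather than to $|u_\xi^\lambda(s)|$, which produces the constant $C(t)=1+tK_\lambda e^{K_\lambda t}$ directly upon adding $|u|$ back, whereas you obtain $e^{K_\lambda t}|u|$ first and then invoke $e^x\leqslant 1+xe^x$; both lead to the stated bound.
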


\begin{proof}
Denoting by $\xi_0 \in \Gamma^t_{y,x}$ the straight line segment defined by $\xi_0(s)=y+s(y-x)/t$ for any $s\in[0,t]$ and in view of (L2), we have that
	\begin{align*}
	 &\, \int_0^t| L_0(\xi,\dot{\xi})| \ ds\leqslant \int_0^t\{L_0(\xi,\dot{\xi})+2c_0\}\ ds \leqslant  \int_0^t\{L_0(\xi_0,\dot{\xi_0})+2c_0\}\ ds \\
	\leqslant&\,\int_0^t\{\overline{\theta}_0(R/t)+2c_0\}\ ds=t\kappa(R/t),
	\end{align*}
	where $\kappa(r)=\overline{\theta}_0(r)+2c_0$. Therefore, 
	\begin{align*}
	&\,|u^{\lambda}_{\xi}(s)-u|\leqslant\int_0^s|L^{\lambda}(\xi,u^{\lambda}_{\xi},\dot{\xi})|\ d\tau\leqslant\int_0^s|L_{\lambda}(\xi,\dot{\xi})|\ d\tau+ K_{\lambda}\int_0^s|u_\xi^\lambda|\ d\tau \\
	\leqslant&\,\int_0^s|L_0(\xi,\dot{\xi})|\ d\tau+\int_0^s|L_\lambda(\xi,\dot{\xi})-L_0(\xi,\dot{\xi})|\ d\tau+ K_{\lambda}\int_0^s |u_\xi^\lambda-u| \ d\tau +t K_\lambda |u| \\
	\leqslant&\,\int_0^t|L_0(\xi,\dot{\xi})|\ ds+\int_0^t|L_\lambda(\xi,\dot{\xi})-L_0(\xi,\dot{\xi})|\ ds+ K_{\lambda}\int_0^s |u_\xi^\lambda-u|\ d\tau +t K_\lambda |u| \\
	\leqslant&\, t\kappa(R/t)+\int_0^t|L_\lambda(\xi,\dot{\xi})-L_0(\xi,\dot{\xi})|\ ds+K_{\lambda}\int_0^s |u_\xi^\lambda-u| \ d\tau +t K_\lambda |u|  
	\end{align*}
	Due to Gronwall inequality , we obtain that
	\begin{align*}
	|u^{\lambda}_{\xi}(s)-u|\leqslant\left(t\kappa(R/t)+tK_\lambda|u|+\int_0^t|L_\lambda(\xi,\dot{\xi})-L_0(\xi,\dot{\xi})|\ ds\right) \exp(K_\lambda t),
	\end{align*}
	which leads to our conclusion.
\end{proof}

\begin{The}\label{vanishing_1}
Suppose that $\{L^{\lambda}\}_{\lambda>0}$ is a family of Tonelli Lagrangians satisfying conditions \mbox{\rm (L1)}, \mbox{\rm (L2)} and \mbox{\rm (L3')}, with $\{H^{\lambda}\}$ the family of associated Tonelli Hamiltonians. Let each $u^{\lambda}$ be the unique viscosity solution of \eqref{eq:intro_HJe} with respect to $H^{\lambda}$ and $u$ defined by \eqref{eq:value_function_no_u} be the unique viscosity solution of \eqref{eq:HJe1}. If $\phi$ is Lipschitz and bounded, then
\begin{align*}
	\lim_{\lambda\to0^+}u^{\lambda}(t,x)=u(t,x),\quad (t,x)\in(0,+\infty)\times\R^n.
\end{align*}
\end{The}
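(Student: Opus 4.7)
The plan is to show $\limsup_{\lambda\to 0^+}u^\lambda(t,x)\le u(t,x)\le\liminf_{\lambda\to 0^+}u^\lambda(t,x)$. Both sides rest on the decomposition
\[
L^\lambda(\xi,w,\dot\xi)-L_0(\xi,\dot\xi)=[L^\lambda(\xi,w,\dot\xi)-L_\lambda(\xi,\dot\xi)]+[L_\lambda-L_0](\xi,\dot\xi),
\]
whose first bracket is pointwise $O(K_\lambda|w|)$ via (L3') and the mean value theorem, and whose second vanishes in integrated form whenever $(\xi(s),\dot\xi(s))$ remains in a fixed compact subset of $\R^n\times\R^n$, by pointwise convergence $L_\lambda\to L_0$ together with the local uniform bound $\sup_S|L_\lambda|\le C_S$ from (L3').

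For the upper bound, apply Lemma~\ref{inf_min} to $L_0$ to get a minimizer $y_0$ of $u(t,x)=\inf_y\{\phi(y)+A_t(y,x)\}$ with $C^2$ minimizing curve $\xi_0\in\Gamma^t_{y_0,x}$. Since $\xi_0$ is $\lambda$-independent, $\{(\xi_0(s),\dot\xi_0(s)):s\in[0,t]\}$ is a fixed compact set, so $\int_0^t|L_\lambda-L_0|(\xi_0,\dot\xi_0)\,ds\to 0$ by dominated convergence, and Lemma~\ref{bound_u_2} then yields $\sup_\lambda\|u^\lambda_{\xi_0}\|_\infty<\infty$. Feeding these into $u^\lambda(t,x)\le u^\lambda_{\xi_0}(t)=\phi(y_0)+\int_0^t L^\lambda(\xi_0,u^\lambda_{\xi_0},\dot\xi_0)\,ds$ and applying the decomposition above gives $u^\lambda_{\xi_0}(t)\to u(t,x)$.

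For the lower bound, take minimizers $y_\lambda,\xi_\lambda$ of $u^\lambda(t,x)$ from Lemma~\ref{inf_min}, whose constants $\mu(t)$ remain bounded for small $\lambda$ (since $K_\lambda\to 0$), so that $\{y_\lambda\}$ is precompact. Combining the lower bound of Lemma~\ref{bound_fundamental_solution} with the trivial upper bound $u^\lambda(t,x)\le A^\lambda(t,x,x,\phi(x))$ (constant curve $\xi\equiv x$) yields the uniform action estimate $\int_0^t\theta_0(|\dot\xi_\lambda|)\,ds\le C$; superlinearity of $\theta_0$ then gives uniform integrability of $\{|\dot\xi_\lambda|\}$. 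Extracting a subsequence $\lambda_k\to 0$, one may assume $y_{\lambda_k}\to y^*$, $\xi_{\lambda_k}\to\xi^*$ uniformly, and $\dot\xi_{\lambda_k}\rightharpoonup\dot\xi^*$ weakly in $L^1([0,t];\R^n)$ with $\xi^*\in\Gamma^t_{y^*,x}$. A uniform $L^\infty$ bound on $u^{\lambda_k}_{\xi_{\lambda_k}}$ follows by integrating \eqref{eq:caratheodory_L} and applying Gronwall to the action estimate. Writing $u^{\lambda_k}(t,x)=\phi(y_{\lambda_k})+\int_0^t L^{\lambda_k}(\xi_{\lambda_k},u^{\lambda_k}_{\xi_{\lambda_k}},\dot\xi_{\lambda_k})\,ds$ and using the decomposition reduces the matter to
\[
\liminf_{k\to\infty}\int_0^t L_{\lambda_k}(\xi_{\lambda_k},\dot\xi_{\lambda_k})\,ds\ \ge\ \int_0^t L_0(\xi^*,\dot\xi^*)\,ds,
\]
which, together with $u(t,x)\le\phi(y^*)+\int_0^t L_0(\xi^*,\dot\xi^*)\,ds$, closes the estimate.

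The main obstacle is exactly this final variational lower semicontinuity with a sequence of integrands. Since (L3') only guarantees pointwise convergence $L_\lambda\to L_0$ (not locally uniform, as (L3'') would), classical Tonelli lower semicontinuity does not apply directly to replace $L_{\lambda_k}$ by $L_0$ in the integral. The resolution rests on convexity of $L_{\lambda_k}$ in $v$ from (L1): pointwise-convergent convex integrands $\Gamma$-converge, and an Ioffe--Buttazzo--Dal~Maso type lower-semicontinuity theorem then supplies the required bound, with the uniform integrability of $\dot\xi_{\lambda_k}$ serving as the equi-integrability hypothesis. (Upgrading (L3') to (L3'') promotes this to locally uniform convergence, from which the stronger Main Result~II follows.)
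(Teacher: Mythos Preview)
Your upper-bound argument (testing $u^\lambda$ against the $L_0$-minimizer $\xi_0$, bounding $u^\lambda_{\xi_0}$ via Lemma~\ref{bound_u_2}, and applying dominated convergence) matches the paper's exactly. For the lower bound the approaches diverge. You work at the level of weak $L^1$-compactness: from an action estimate $\int_0^t\theta_0(|\dot\xi_\lambda|)\,ds\le C$ you extract $\dot\xi_{\lambda_k}\rightharpoonup\dot\xi^*$ and then appeal to an Ioffe--Buttazzo--Dal~Maso lower-semicontinuity theorem for sequences of convex integrands to pass to the limit. The paper proceeds more directly and never extracts subsequences or invokes $\Gamma$-convergence: since each $\eta^\lambda$ is a Herglotz minimizer, the regularity results in the appendix (Propositions~\ref{bound_u} and~\ref{Lip}, applied to $L^\lambda$ with $K_\lambda\le 1$ and endpoint control $|y^\lambda-x|\le\mu(t)t$ from Lemma~\ref{inf_min}) yield uniform-in-$\lambda$ $L^\infty$ bounds on both $u^\lambda_{\eta^\lambda}$ and $\dot\eta^\lambda$, so $(\eta^\lambda(s),\dot\eta^\lambda(s))$ remains in a fixed compact set. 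The paper then writes $u(t,x)-u^\lambda(t,x)\le tK_\lambda\sup_s|u^\lambda_{\eta^\lambda}|+\int_0^t(L_0-L_\lambda)(\eta^\lambda,\dot\eta^\lambda)\,ds$ and concludes. Your route is more robust---it would survive without the full Tonelli regularity of minimizers---but buys that generality with heavier external machinery; the paper's route is elementary once the appendix estimates are available. It is worth noting that under (L3') alone, with only pointwise convergence $L_\lambda\to L_0$ and \emph{moving} arguments $(\eta^\lambda,\dot\eta^\lambda)$, the paper's final ``Therefore'' also conceals a nontrivial step; your explicit lower-semicontinuity argument can be read as one way to make that passage rigorous.
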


\begin{Rem}
	For the uniqueness of the viscosity solutions of both \eqref{eq:intro_HJe} and \eqref{eq:HJe1}, see, for instance, \cite{Barles}.
\end{Rem}

\begin{proof}
	Fix $x\in\R^n$ and $t>0$. Let $y=y_{t,x}$ be a minimizer for $A_t(\cdot,x)$ with respect to $L_0$. Then there exists a $C^2$ curve $\xi:[0,t]\to\R^n$, $\xi(0)=y$ and $\xi(t)=x$, such that
	\begin{align*}
		u(t,x)=\phi(y)+\int^t_0L_0(\xi,\dot{\xi})\ ds.
	\end{align*}
	Let $u^{\lambda}_{\xi}$ be the unique solution of \eqref{eq:caratheodory_L} with respect to $\xi$ and $L^{\lambda}$, then we have that
	\begin{align*}
		u^{\lambda}(t,x)-u(t,x)\leqslant&\,\left\{\phi(y)+\int^t_0L^{\lambda}(\xi,u^{\lambda}_{\xi},\dot{\xi})\ ds\right\}-\left\{\phi(y)+\int^t_0L_0(\xi,\dot{\xi})\ ds\right\}\\
		=&\,\int^t_0\{L^{\lambda}(\xi,u^{\lambda}_{\xi},\dot{\xi})-L_0(\xi,\dot{\xi})\}\ ds\\
		=&\,\int^t_0\{L^{\lambda}(\xi,u^{\lambda}_{\xi},\dot{\xi})-L_{\lambda}(\xi,\dot{\xi})\}\ ds+\int^t_0\{L_{\lambda}(\xi,\dot{\xi})-L_0(\xi,\dot{\xi})\}\ ds\\
		 \leqslant&\,tK_{\lambda}\max_{s\in[0,t]}|u^{\lambda}_{\xi}(s)|+\int^t_0\{L_{\lambda}(\xi,\dot{\xi})-L_0(\xi,\dot{\xi})\}\ ds
	\end{align*}
	In the same way as the proof of Lemma \ref{inf_min} (see also \cite{Cannarsa-Cheng3}), we can get $|y-x|=|y_{t,x}-x|\leqslant \mu_0 t$ ,where $\mu_0=c_0+C+\theta_0^*(\text{Lip}\,(\phi)+1)$.
	
	Now, invoking Lemma \ref{bound_u_2}, for $\lambda>0$ such that $K_{\lambda}\leqslant1$, we conclude
	\begin{align*}
	\max_{s\in[0,t]}|u^{\lambda}_{\xi}(s)|\leqslant tF_1(t,\mu_0)+C(t)\max_{y\in \R^n}|\phi(y)|+\exp(K_\lambda t)\cdot\int_0^t|L_\lambda(\xi,\dot{\xi})-L_0(\xi,\dot{\xi})|\ ds.
	\end{align*}
	Since both $\xi$ and $\dot{\xi}$ is uniformly bounded (see, for instance, \cite{Cannarsa-Cheng3}), by using (L3') and dominated convergence theorem, we have that
	\begin{equation}\label{eq:limsup}
		\limsup_{\lambda\to0^+}(u^{\lambda}(t,x)-u(t,x))\leqslant0.
	\end{equation}

	Similarly, let $y^{\lambda}=y^{\lambda}_{t,x}$ be a minimizer for $A(t,\cdot,x,\phi(\cdot))$. Then there exists a $C^2$ curve $\eta^{\lambda}:[0,t]\to\R^n$, $\eta^{\lambda}(0)=y^{\lambda}$ and $\eta^{\lambda}(t)=x$, and a $C^2$ curve $u^{\lambda}_{\eta^{\lambda}}$ determined by \eqref{eq:caratheodory_L} with $u_{\eta^{\lambda}}(0)=\phi(y^{\lambda})$, such that
	\begin{align*}
		u^{\lambda}(t,x)=\phi(y^{\lambda})+\int^t_0L^{\lambda}(\eta^{\lambda},u^{\lambda}_{\eta^{\lambda}},\dot{\eta}^{\lambda})\ ds.
	\end{align*}
	It follows that
	\begin{align*}
		&\,u(t,x)-u^{\lambda}(t,x)\\
		\leqslant&\,\int^t_0\{L_0(\eta^{\lambda},\dot{\eta}^{\lambda})-L^{\lambda}(\eta^{\lambda},u^{\lambda}_{\eta^{\lambda}},\dot{\eta}^{\lambda})\}\ ds\\
		=&\,\int^t_0\{L_0(\eta^{\lambda},\dot{\eta}^{\lambda})-L_{\lambda}(\eta^{\lambda},\dot{\eta}^{\lambda})\}\ ds+\int^t_0\{L_{\lambda}(\eta^{\lambda},\dot{\eta}^{\lambda})-L^{\lambda}(\eta^{\lambda},u^{\lambda}_{\eta^{\lambda}},\dot{\eta}^{\lambda})\}\ ds\\
		\leqslant&\,tK_{\lambda}\max_{s\in[0,t]}|u^{\lambda}_{\eta^{\lambda}}(s)|+\int^t_0\{L_0(\eta^{\lambda},\dot{\eta}^{\lambda})-L_{\lambda}(\eta^{\lambda},\dot{\eta}^{\lambda})\}\ ds.
	\end{align*}
	Invoking Proposition \ref{bound_u}, for $\lambda>0$ such that $K_{\lambda}\leqslant1$, we conclude
	\begin{align*}
		\max_{s\in[0,t]}|u^{\lambda}_{\eta^{\lambda}}(s)|\leqslant tF_2(t,\mu(t))+C(t)\max_{y\in\R^n}|\phi(y)|
	\end{align*}
	where $\mu(t)$ is given by Lemma \ref{inf_min}. Therefore 
	\begin{equation}\label{eq:liminf}
		\limsup_{\lambda\to0^+}(u(t,x)-u^{\lambda}(t,x))\leqslant0.
	\end{equation}
	The combination of \eqref{eq:limsup} and \eqref{eq:liminf} leads to our conclusion.
\end{proof}

\begin{Cor}
Under the same assumptions as Theorem \ref{vanishing_1} and replacing \mbox{\rm (L3')} by \mbox{\rm (L3'')}, then $u^{\lambda}$ tends to $u$ uniformly as $\lambda\to0^+$ on any compact subset of $(0,+\infty)\times\R^n$.	
\end{Cor}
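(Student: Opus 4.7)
The plan is to upgrade the proof of Theorem~\ref{vanishing_1} by making every ingredient uniform in $(t,x)$ over a compact set $Q \subset (0,+\infty)\times\R^n$. The pointwise proof showed that the difference $u^\lambda(t,x)-u(t,x)$ is controlled by two quantities: $tK_\lambda \max_{s\in[0,t]}|u^\lambda_\xi(s)|$ (and similarly for $\eta^\lambda$), and an error integral of the form $\int_0^t |L_\lambda(\gamma,\dot{\gamma})-L_0(\gamma,\dot{\gamma})|\, ds$ evaluated along the two sequences of minimizing curves $\xi$ and $\eta^\lambda$. Since (L3'') upgrades (L3') only in that $L_\lambda\to L_0$ uniformly on compact subsets of $\R^n\times\R^n$, the task reduces to confining both families of minimizers to a fixed compact set in position-velocity space, uniformly in $\lambda$ and $(t,x)\in Q$.

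First, I would fix a compact $Q\subset(0,+\infty)\times\R^n$, write $Q\subset [t_0,T]\times \overline{B(0,R_0)}$ for some $0<t_0<T$ and $R_0>0$, and apply Lemma~\ref{inf_min} to the Lagrangians $L_0$ and $L^\lambda$. Because the constant $K$ entering $\mu(t)$ for $L^\lambda$ is $K_\lambda\to 0$, the bound $\mu(t)$ is itself bounded as $\lambda\to 0^+$ for $t\in[t_0,T]$; together with the analogous bound $\mu_0$ for $u$, this confines the initial points $y_{t,x}$ and $y^\lambda_{t,x}$ to a compact set uniformly in $(t,x)\in Q$ and small $\lambda$. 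A standard Tonelli-type a priori estimate (the boundedness referenced in the last paragraph of the proof of Theorem~\ref{representation_formula_2} via Proposition~\ref{bound_u} and Proposition~\ref{Lip}) then yields uniform bounds on $|\dot{\xi}(s)|$ and $|\dot{\eta}^\lambda(s)|$ for $s\in[0,t]$, $(t,x)\in Q$ and small $\lambda$. Consequently there exists a compact set $K_Q\subset\R^n\times\R^n$ containing all graphs $s\mapsto(\gamma(s),\dot{\gamma}(s))$ of these minimizers.

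Next, I would invoke (L3'') to obtain $\sup_{K_Q}|L_\lambda-L_0|\to 0$, so the integrands $|L_\lambda(\xi,\dot{\xi})-L_0(\xi,\dot{\xi})|$ and $|L_\lambda(\eta^\lambda,\dot{\eta}^\lambda)-L_0(\eta^\lambda,\dot{\eta}^\lambda)|$ are uniformly small. Feeding this into Lemma~\ref{bound_u_2} (with $R$ replaced by the radius coming from the uniform bound on $|y^\lambda_{t,x}-x|$) gives a bound on $\max_{s\in[0,t]}|u^\lambda_\xi(s)|$ and $\max_{s\in[0,t]}|u^\lambda_{\eta^\lambda}(s)|$ that depends only on $T$, $\|\phi\|_\infty$, $\mathrm{Lip}\,(\phi)$ and a vanishing remainder; in particular these maxima are uniformly bounded on $Q$ for small $\lambda$. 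Then the term $tK_\lambda\max_{s\in[0,t]}|u^\lambda_\xi(s)|$ tends to $0$ uniformly since $K_\lambda\to 0$, and the error integral does the same by uniform convergence of $L_\lambda$ on $K_Q$. Repeating the two-sided estimate in the proof of Theorem~\ref{vanishing_1} yields $\sup_{(t,x)\in Q}|u^\lambda(t,x)-u(t,x)|\to 0$.

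The main obstacle is ensuring the confinement step: one must check that the a priori Tonelli bounds on the velocities $|\dot{\xi}|$ and $|\dot{\eta}^\lambda|$ along minimizers can be made uniform in the contact parameter $\lambda$. This is where the hypothesis $K_\lambda\to 0$ in (L3'') is essential, because it makes the $\lambda$-dependent constants in the analog of Lemma~\ref{inf_min} and in the standard energy estimates bounded (in fact convergent) as $\lambda\to 0^+$, so that the compact set $K_Q$ can be chosen independently of $\lambda$ once $\lambda$ is small enough. Once this uniformity is in hand, every other step is a routine $\sup$-wise version of the pointwise proof already given.
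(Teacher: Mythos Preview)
Your proposal is correct and follows essentially the same approach as the paper, which simply records ``It is similar to the proof of Theorem~\ref{vanishing_1}.'' You have supplied exactly the details that sentence leaves implicit: the upgrade from (L3') to (L3'') lets one replace the dominated convergence argument by a uniform bound on $|L_\lambda-L_0|$ over a compact set in $(x,v)$, and the $\lambda$-uniform confinement of minimizers via Lemma~\ref{inf_min} and Propositions~\ref{bound_u}, \ref{Lip} (with $K_\lambda\to 0$) is precisely what makes that compact set available.
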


\begin{proof}
	It is similar to the proof of Theorem \ref{vanishing_1}.
\end{proof}

\appendix

\section{Carath\'eodory equations}
Let $\Omega\subset\R^{n+1}$ be an open set. A function $f:\R\times\R^n\to\R^n$ is said to satisfy {\em Carath\'eodory condition} if
\begin{itemize}[-]
  \item for any $x\in\R^n$, $f(\cdot,x)$ is measurable;
  \item for any $t\in\R$, $f(t,\cdot)$ is continuous;
  \item for each compact set $U$ of $\Omega$, there is an integrable function $m_U(t)$ such that
  $$
  |f(t,x)|\leqslant m_U(t),\quad (t,x)\in U.
  $$
\end{itemize}
The classical problem of following Carath\'eodory equation
\begin{equation}\label{eq:caratheodory_system}
	\dot{x}(t)=f(t,x(t)),\quad a.e., t\in I
\end{equation}
is to find an absolutely continuous function $x$ defined on a real interval $I$ such that $(t,x(t))\in\Omega$ for $t\in I$ and satisfies \eqref{eq:caratheodory_system}. 

\begin{Pro}[Carath\'eodory]\label{caratheodory}
	If $\Omega$ is an open set in $\R^{n+1}$ and $f$ satisfies the Carath\'eodory conditions on $\Omega$, then, for any $(t_0, x_0)$ in $\Omega$, there is a solution of \eqref{eq:caratheodory_system} through $(t_0, x_0)$. Moreover, if the function $f(t,x)$ is also locally Lipschitzian in $x$ with a measurable Lipschitz function, then the uniqueness property of the solution remains valid.
\end{Pro}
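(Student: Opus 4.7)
The plan is to reduce the Cauchy problem to an equivalent integral equation, construct approximate solutions by Tonelli's delayed scheme, extract a uniform limit via Arzelà–Ascoli, and then pass to the limit using the Carathéodory bound $m_U$ as a dominator; uniqueness under the additional Lipschitz hypothesis will then fall to Gronwall.

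First I would fix a closed rectangle $R=[t_0-a,t_0+a]\times\overline{B(x_0,b)}\subset\Omega$ on which the integrable bound $m_R$ is available, and set $M(t)=\int_{t_0}^{t}m_R(s)\,ds$, which is absolutely continuous and vanishes at $t_0$. Choose $\alpha\in(0,a]$ so small that $M(t_0\pm\alpha)\leqslant b$; the argument then proceeds on $[t_0,t_0+\alpha]$, the left side being symmetric. An absolutely continuous $x$ solves \eqref{eq:caratheodory_system} through $(t_0,x_0)$ iff it satisfies
\[
x(t)=x_0+\int_{t_0}^{t}f(s,x(s))\,ds,
\]
so it suffices to produce such an $x$.

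Next I would define, for each integer $k\geqslant1$, a delayed approximant $x_k$ by setting $x_k(t)=x_0$ on $[t_0-1/k,t_0]$ and then, on $[t_0+(j-1)/k,t_0+j/k]\cap[t_0,t_0+\alpha]$ inductively,
\[
x_k(t)=x_0+\int_{t_0}^{t-1/k}f(s,x_k(s))\,ds.
\]
The induction is well posed because the integrand uses $x_k$ only on a strictly earlier interval; an induction on $j$ using $|x_k(t)-x_0|\leqslant M(t-1/k)\leqslant b$ keeps $(s,x_k(s))\in R$ so that $f(s,x_k(s))$ is defined and bounded by $m_R(s)$. The same estimate yields $|x_k(t_1)-x_k(t_2)|\leqslant|M(t_1-1/k)-M(t_2-1/k)|$, which, by absolute continuity of $M$, gives equicontinuity of the family $\{x_k\}$; together with the uniform bound this lets Arzelà–Ascoli extract a subsequence $x_{k_j}\to x$ uniformly on $[t_0,t_0+\alpha]$. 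Since $f(s,\cdot)$ is continuous, $f(s,x_{k_j}(s))\to f(s,x(s))$ for a.e.\ $s$, and the pointwise bound $|f(s,x_{k_j}(s))|\leqslant m_R(s)$ allows the dominated convergence theorem to pass to the limit in the integral equation. This produces an absolutely continuous $x$ with $x(t_0)=x_0$ and $\dot x(t)=f(t,x(t))$ a.e., and the same construction on $[t_0-\alpha,t_0]$ yields the full local solution.

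For uniqueness, suppose $x,\tilde x$ are two local solutions through $(t_0,x_0)$ and let $\ell(t)$ denote the measurable local Lipschitz constant of $f(t,\cdot)$ on a common tube containing both graphs, with $\ell\in L^1_{\mathrm{loc}}$. The function $\varphi(t)=|x(t)-\tilde x(t)|$ is absolutely continuous, vanishes at $t_0$, and satisfies $\varphi(t)\leqslant\int_{t_0}^{t}\ell(s)\varphi(s)\,ds$; Gronwall's inequality in integral form gives $\varphi\equiv0$. The step I expect to be the main obstacle is the equicontinuity and limit passage for $\{x_k\}$, since neither a Lipschitz bound on $f$ nor continuity in $t$ is available, so the argument has to lean entirely on the absolute continuity of $M$ and on dominated convergence against $m_R$.
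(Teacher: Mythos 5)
Your proof is correct: the paper does not prove this proposition itself but defers to Coddington--Levinson and Filippov, and your argument (delayed Tonelli approximants controlled by the absolutely continuous majorant $M(t)=\int_{t_0}^t m_R$, Arzel\`a--Ascoli, dominated convergence, and Gronwall with an $L^1$ coefficient for uniqueness) is precisely the classical proof found in those references. The only points worth making explicit are that the discrepancy between $\int_{t_0}^{t-1/k_j}$ and $\int_{t_0}^{t}$ vanishes in the limit because $M(t)-M(t-1/k_j)\to0$ uniformly, and that the ``measurable Lipschitz function'' must be read as locally integrable for the Gronwall step to apply.
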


\noindent For the proof of Proposition \ref{caratheodory} and more results related to Carath\'eodory equation \eqref{eq:caratheodory_system}, the readers can refer to \cite{Coddington_Levinson} and \cite{Filippov}.

\section{Regularity results}

In this section, we collect some fundamental estimates mainly from \cite{CCY}. We always suppose that conditions (L1)-(L3) are satisfied.

\begin{Pro}\label{bound_u}
	Suppose $x\in\R^n$, $t,R>0$, $u\in\R$ and $|y-x|\leqslant R$. If $\xi\in\Gamma^t_{x,y}$ is a minimizer for \eqref{eq:fundamental_solution} with $u_{\xi}$ determined by \eqref{eq:caratheodory_L}, then there exists a function $F:[0,+\infty)\times[0,+\infty)\to[0,+\infty)$ depending on $R$, with $F(r,\cdot)$ being nondecreasing and superlinear and $F(\cdot,r)$ being nondecreasing for any $r\geqslant0$, such that
	\begin{equation}\label{eq:u_bound}
		|u_{\xi}(s)|\leqslant tF(t,R/t)+C(t)|u|,\quad s\in[0,t],
	\end{equation}
	where $C(r)=\max\{e^{Kr},Kre^{Kr}+1\}$. 
\end{Pro}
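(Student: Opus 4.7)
The plan is to mimic the argument used for Lemma~\ref{bound_u_2}, with the difference that here $\xi$ is a minimizer of the generalized variational problem with respect to $L$ itself (not $L_0$); this forces an extra bookkeeping step to disentangle the role of $u_\xi$. The argument proceeds in three stages: a Gronwall step that reduces the desired bound to a bound on $B := \int_0^t |L_0(\xi,\dot\xi)|\,d\tau$, a comparison step that exploits minimality against the straight-line segment $\xi_0(s) := x + s(y-x)/t \in \Gamma^t_{x,y}$, and a closing step that resolves a self-referential inequality.

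For the first stage, condition (L3) yields $|L(\xi,u_\xi,\dot\xi) - L_0(\xi,\dot\xi)| \leq K|u_\xi|$ pointwise, so the Carath\'eodory equation gives $|\dot u_\xi| \leq |L_0(\xi,\dot\xi)| + K|u_\xi|$. Splitting $|u_\xi| \leq |u_\xi - u| + |u|$ and applying Gronwall to the resulting integral inequality produces
\begin{equation*}
|u_\xi(s)| \leq (1 + Kte^{Kt})|u| + e^{Kt}B = C(t)|u| + e^{Kt}B \quad\text{for all } s \in [0,t],
\end{equation*}
which already matches the $C(t)$ of the proposition. The same argument applied to the comparison curve $\xi_0$, using $|\dot\xi_0| \leq R/t$ together with (L2) to bound $|L_0(\xi_0,\dot\xi_0)| \leq \overline{\theta}_0(R/t) + c_0$, delivers an explicit a priori bound on $|u_{\xi_0}(s)|$ purely in terms of $t$, $R/t$, and $|u|$.

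For the second stage, minimality of $\xi$ supplies $u_\xi(t) - u \leq u_{\xi_0}(t) - u$. Combined with the elementary pointwise inequality $|L_0| \leq L_0 + 2c_0$ (from $L_0 \geq -c_0$ in (L2)) and with the identity $\int_0^t L_0(\xi,\dot\xi)\,d\tau = u_\xi(t) - u - \int_0^t[L(\xi,u_\xi,\dot\xi) - L_0(\xi,\dot\xi)]\,d\tau$, one obtains
\begin{equation*}
B \leq (u_{\xi_0}(t) - u) + K\int_0^t |u_\xi|\,d\tau + 2c_0 t.
\end{equation*}
Inserting the pointwise bound from the first stage produces a linear inequality for $B$ of the form $B \leq a(t, R/t, |u|) + Kte^{Kt} B$; resolving this gives $B \leq tF(t, R/t)$ plus a $|u|$-linear correction absorbable into $C(t)|u|$ after redefining $F$. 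The resulting $F$ is built explicitly from $\overline{\theta}_0(R/t)$, $c_0$, $K$, $t$, and exponential factors, so nondecreasingness in each argument and superlinearity in the second are inherited from $\overline{\theta}_0$ and from the $e^{Kt}$ factors.

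The main obstacle is precisely the self-referential coupling in the closing step: the coefficient $Kte^{Kt}$ can exceed $1$ for large $t$, in which case the linear inequality for $B$ is not directly invertible. I would circumvent this by partitioning $[0,t]$ into subintervals short enough that $Kt'e^{Kt'}<1$, closing the Gronwall-plus-minimality argument on each subinterval using Lemma~\ref{dyn_prog} (which ensures that the restriction of $\xi$ to any subinterval is itself a minimizer), and iterating, feeding the controlled endpoint value of $u_\xi$ on each subinterval as the initial datum for the next. A finite iteration then produces the global bound with $C(t)$ and $F(t, R/t)$ of the claimed form.
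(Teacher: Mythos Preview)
The paper does not give its own proof of Proposition~\ref{bound_u}; the result is quoted from \cite{CCY} in the appendix. That said, the paper's proof of Lemma~\ref{bound_fundamental_solution} displays exactly the device that is missing from your argument and that dissolves the obstacle you flag.

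Your first two stages are correct: the Gronwall reduction to $B=\int_0^t|L_0(\xi,\dot\xi)|\,ds$, and the comparison inequality $B\leqslant (u_{\xi_0}(t)-u)+K\int_0^t|u_\xi|\,ds+2c_0t$ coming from minimality against the straight segment, are both valid. The gap is in your closing step. Partitioning $[0,t]$ into short subintervals and invoking Lemma~\ref{dyn_prog} does not work as written: on a subinterval $[t_i,t_{i+1}]$ the restricted curve is a minimizer from $\xi(t_i)$ to $\xi(t_{i+1})$, and your comparison step needs a straight-line competitor between those endpoints. But you have no a~priori bound on $|\xi(t_{i+1})-\xi(t_i)|$, so the competitor's speed---and hence $u_{\xi_0}(t_{i+1})$---cannot be controlled in terms of $t$, $R$, $|u|$ alone. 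Bounding the spatial excursion of $\xi$ is the content of Proposition~\ref{Lip}, which is stated \emph{after} Proposition~\ref{bound_u} and (in \cite{CCY}) depends on it; your iteration is therefore circular.

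The way to avoid the self-reference altogether is the sign-based integrating factor used in the proof of Lemma~\ref{bound_fundamental_solution}: partition $[0,t]$ according to the sign of $u_\xi$, not into short time slices. On $\{u_\xi>0\}$ the inequality $\dot u_\xi\geqslant L_0(\xi,\dot\xi)-Ku_\xi$ gives $\frac{d}{ds}(e^{Ks}u_\xi)\geqslant e^{Ks}L_0$; on $\{u_\xi<0\}$ the inequality $\dot u_\xi\geqslant L_0(\xi,\dot\xi)+Ku_\xi$ gives $\frac{d}{ds}(e^{-Ks}u_\xi)\geqslant e^{-Ks}L_0$. Integrating these linear inequalities yields a lower bound $u_\xi(s)\geqslant -e^{Kt}|u|-c_0te^{Kt}$ with no $\sup|u_\xi|$ on the right. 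For the upper bound, combine the global comparison $u_\xi(t)\leqslant u_{\xi_0}(t)$ (where $u_{\xi_0}(t)$ is bounded explicitly by the same sign-splitting applied to the known curve $\xi_0$) with Lemma~\ref{dyn_prog} on $[s,t]$: the lower-bound argument on that interval with initial value $u_\xi(s)\geqslant 0$ gives $u_\xi(t)\geqslant e^{-K(t-s)}u_\xi(s)-c_0(t-s)e^{K(t-s)}$, hence $u_\xi(s)\leqslant e^{Kt}\bigl(u_{\xi_0}(t)+c_0te^{Kt}\bigr)$. This closes the estimate with the stated $F$ and $C(t)$ and no self-referential term.
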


\begin{Pro}\label{Lip}
	Suppose $x\in\R^n$, $t,R>0$, $u\in\R$ and $|y-x|\leqslant R$. If $\xi\in\Gamma^t_{x,y}$ is a minimizer for \eqref{eq:fundamental_solution}, then, there exist a continuous function $F=F_{u,x}:[0,+\infty)\times[0,+\infty)\to[0,+\infty)$, $F(r,\cdot)$ is nondecreasing and superlinear and $F(\cdot,r)$ is nondecreasing for any $r\geqslant0$, such that
	\begin{align*}
		\operatorname*{ess\ sup}_{s\in[0,t]}|\dot{\xi}(s)|\leqslant F(t,R/t).
	\end{align*}
	In particular,
	\begin{align*}
		|\xi(s)-x|\leqslant tF(t,R/t),\quad s\in[0,t].
	\end{align*}
\end{Pro}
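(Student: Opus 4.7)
The plan is to rerun the argument of Theorem \ref{vanishing_1} while carefully tracking that every constant appearing in the error bounds becomes independent of $(t,x)$ once $(t,x)$ is confined to a compact set $Q\subset(0,+\infty)\times\R^n$ and $\lambda$ is small enough. The stronger hypothesis (L3'') is used only at the final step, where an integral of $|L_\lambda-L_0|$ along a minimizing curve must be driven to $0$; (L3') gave pointwise vanishing, whereas (L3'') will give uniform vanishing over the compact set in phase space where all the relevant minimizing pairs $(\xi,\dot\xi)$ live.

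First, I would fix $Q\subset[a,b]\times\overline{B(0,R_0)}$ with $a>0$, and pick $\lambda_0>0$ small so that $K_\lambda\leqslant 1$ for $\lambda\in(0,\lambda_0]$. For each $(t,x)\in Q$, I would select (as in the two halves of the proof of Theorem \ref{vanishing_1}) a minimizer $y=y_{t,x}$ for $A_t(\cdot,x)$ with respect to $L_0$ and its associated $C^2$ curve $\xi\in\Gamma^t_{y,x}$, and analogously a minimizer $y^\lambda=y^\lambda_{t,x}$ for $A(t,\cdot,x,\phi(\cdot))$ with respect to $L^\lambda$ together with its curve $\eta^\lambda\in\Gamma^t_{y^\lambda,x}$. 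Applying Lemma \ref{inf_min} both to $L_0$ and to $L^\lambda$ (the latter with $K$ replaced by $K_\lambda\leqslant 1$) gives $|y-x|+|y^\lambda-x|\leqslant \mu_*\,b$ for a constant $\mu_*$ depending only on $a,b,\mathrm{Lip}(\phi),\|\phi\|_\infty$ and the structural constants in (L1)--(L3). Plugging this into Proposition \ref{bound_u} and Proposition \ref{Lip} produces a compact set $S\subset\R^n\times\R^n$, depending only on $Q$, $\phi$, and the structural constants, such that $(\xi(s),\dot\xi(s))$ and $(\eta^\lambda(s),\dot\eta^\lambda(s))$ belong to $S$ for every $s\in[0,t]$, every $(t,x)\in Q$, and every $\lambda\in(0,\lambda_0]$. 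Lemma \ref{bound_u_2} then yields a uniform bound $\max_s|u^\lambda_\xi(s)|+\max_s|u^\lambda_{\eta^\lambda}(s)|\leqslant M_*$ for a constant $M_*$ independent of $(t,x)\in Q$ and of $\lambda\in(0,\lambda_0]$ (using (L3'') to bound $\int_0^t|L_\lambda-L_0|$ trivially by $t$ times a constant on $S$).

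With these uniform bounds in hand, the two chains of inequalities from the proof of Theorem \ref{vanishing_1} specialise to
\begin{align*}
	u^\lambda(t,x)-u(t,x)&\leqslant bK_\lambda M_* + b\,\|L_\lambda-L_0\|_{L^\infty(S)},\\
	u(t,x)-u^\lambda(t,x)&\leqslant bK_\lambda M_* + b\,\|L_\lambda-L_0\|_{L^\infty(S)},
\end{align*}
for all $(t,x)\in Q$ and all $\lambda\in(0,\lambda_0]$. By (L3''), $\|L_\lambda-L_0\|_{L^\infty(S)}\to0$ as $\lambda\to 0^+$, and by hypothesis $K_\lambda\to 0$, so the right-hand side tends to $0$ independently of $(t,x)\in Q$. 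This yields $\sup_{(t,x)\in Q}|u^\lambda(t,x)-u(t,x)|\to 0$, i.e.\ the claimed uniform convergence. The one delicate point is the uniform confinement of the minimizing pairs $(\eta^\lambda,\dot\eta^\lambda)$ into a single compact set $S$ that does not depend on $\lambda$; this is where the hypothesis $K_\lambda\to 0$ (allowing us to fix $K_\lambda\leqslant 1$ uniformly in the regularity results) together with the boundedness of $\phi$ is essential, since otherwise the estimate $\mu(t)$ in Lemma \ref{inf_min} would degrade with $\lambda$ and the uniform target compact set $S$ could not be isolated.
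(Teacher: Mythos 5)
Your proposal does not prove the statement in question. The statement is Proposition~\ref{Lip}, an a priori regularity estimate for minimizers of the Herglotz variational problem \eqref{eq:fundamental_solution}: it asserts that any minimizer $\xi\in\Gamma^t_{x,y}$ with $|y-x|\leqslant R$ satisfies $\operatorname{ess\,sup}_{s\in[0,t]}|\dot\xi(s)|\leqslant F(t,R/t)$ for a function $F$ with the stated monotonicity and superlinearity properties. What you have written instead is a proof of the final Corollary of Section~2.2 (Main Result~II), namely the uniform convergence of $u^\lambda$ to $u$ on compact subsets of $(0,+\infty)\times\R^n$ under (L3''). These are entirely different claims. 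Worse, your argument explicitly \emph{invokes} Proposition~\ref{Lip} (``Plugging this into Proposition~\ref{bound_u} and Proposition~\ref{Lip} produces a compact set $S$\dots''), so read as a proof of Proposition~\ref{Lip} it is circular: the uniform confinement of $(\xi,\dot\xi)$ in a compact set $S$, which you correctly identify as the delicate point of the convergence argument, is precisely what Proposition~\ref{Lip} is supposed to supply, and nothing in your text establishes it.

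A genuine proof of Proposition~\ref{Lip} has to work directly with the variational problem and the Euler--Lagrange structure. The standard route (the paper itself defers to \cite{CCY} for this) is: first compare the action of the minimizer with that of the straight segment from $x$ to $y$, using the upper bound in (L2) and the bound on $u_\xi$ from Proposition~\ref{bound_u}, to control $\int_0^t\theta_0(|\dot\xi|)\,ds$ by $t$ times a function of $R/t$; this gives an integral bound on the speed and, by the mean value theorem, a time $s_0$ at which $|\dot\xi(s_0)|$ is controlled. One then upgrades this to a pointwise bound on all of $[0,t]$ by using the fact that the minimizer is $C^2$ and satisfies the Lie equation of Proposition~\ref{Herglotz_Lie}: the dual arc $p(s)=L_v(\xi,u_\xi,\dot\xi)$ obeys $\dot p=-H_x-H_u\,p$, and together with the coercivity of $H$ in $p$ from (H2) and the bound $|H_u|\leqslant K$ from (H3) one propagates the bound at $s_0$ to all $s\in[0,t]$ (a Gronwall or conservation-of-energy-type argument), finally converting the bound on $|p|$ into a bound on $|\dot\xi|=|H_p|$. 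None of these steps appears in your proposal. The second displayed claim, $|\xi(s)-x|\leqslant tF(t,R/t)$, then follows by integrating the speed bound, but again this is downstream of the estimate you have not established.
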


\end{document}